\documentclass{amsart}
\usepackage{amsmath}
%\mag1095
\hfuzz3pt
%\vfuzz4pt
%\advance\textheight1.5\baselineskip
\allowdisplaybreaks[4]

\theoremstyle{plain}
\newtheorem{theorem}{Theorem}
\newtheorem{corollary}[theorem]{Corollary}
\newtheorem{lemma}[theorem]{Lemma}
\newtheorem{proposition}[theorem]{Proposition}
\theoremstyle{definition}
\newtheorem*{example}{Example}
\theoremstyle{remark}
\newtheorem*{remark*}{Remark}

\newcommand\CC{{\mathbb C}}
\newcommand\RR{{\mathbb R}}
\newcommand\DD{{\mathbb D}}
\newcommand\NN{{\mathbb N}}
\newcommand\bS{{\mathbb S}}
\newcommand\TT{{\mathbb T}}
\newcommand\BB{{\mathbb B}}
\newcommand\MM{{\mathbb M}}
\newcommand\HH{{\mathbb H}}
\newcommand\pM{{\partial\MM}}
\newcommand\intH{\int_\HH}
\newcommand\intM{\int_\MM}
\newcommand\intpM{\int_\pM}
\renewcommand\[{\begin{equation}}
\renewcommand\]{\end{equation}}
\newcommand\spr[1]{\langle#1\rangle}
\newcommand\bs{{\mathbf s}}
\newcommand\cS{\mathcal S}
\newcommand\oy{\overline y}
\newcommand\oz{\overline z}
\newcommand\ow{\overline w}
\newcommand\oeta{\overline\eta}
\newcommand\PP{\mathcal P}
\newcommand\dbar{\overline\partial{}}

\newcommand\cb{\bullet}
\newcommand\OnC{O(n+1,\CC)}
\newcommand\OnR{O(n+1,\RR)}
\newcommand\rtm{{\rho\otimes\mu}}
\newcommand\drtm{d(\rtm)}
\newcommand\suml{\sum_{l=0}^\infty}
\newcommand\sumk{\sum_{k=0}^\infty}
\newcommand\sumj{\sum_{j=0}^\infty}
\newcommand\ddlh{\partial\dbar\log\frac1h}
\newcommand\Ho[1]{H_{\overline{#1}}}
\newcommand\cH{\mathcal H}

\begin{document}

\title[Kepler manifold]{Bergman kernels, TYZ expansions and Hankel operators
 on the Kepler manifold}
\author{H\'el\`ene Bommier-Hato}
\address{Aix-Marseille Universit\'e, I2M UMR CNRS~7373,
 39~Rue F.~Joliot-Curie, 13453~Marseille Cedex~13, France}
\email{helene.bommier@gmail.com}
\author{Miroslav~Engli\v s}
\address{Mathematics Institute, Silesian University in Opava,
 Na~Rybn\'\i\v cku~1, 74601~Opava, Czech Republic {\rm and}
 Mathematics Institute, \v Zitn\' a 25, 11567~Prague~1, Czech Republic}
\email{englis{@}math.cas.cz}
\thanks{Research supported by GA\v CR grant no.~201/12/0426
 %, GA~AV~\v CR grant no.~IAA100190802
 %  Barrande project MEB021108
 and RVO funding for I\v CO~67985840.}
\author{El-Hassan Youssfi}
\address{Aix-Marseille Universit\'e, I2M UMR CNRS~7373,
 39 Rue F-Juliot-Curie, 13453~Marseille Cedex 13, France}
\email{el-hassan.youssfi@univ-amu.fr}
\begin{abstract}
For~a class of $\OnR$ invariant measures on the Kepler manifold possessing
finite moments of all orders, we~describe the reproducing kernels of the
associated Bergman spaces, discuss the corresponding asymptotic expansions
of Tian-Yau-Zelditch, and study the relevant Hankel operators with conjugate
holomorphic symbols.
Related reproducing kernels on the minimal ball are also discussed.
Finally, we~observe that the Kepler manifold either does
not admit balanced metrics, or such metrics are not unique.
\end{abstract}
\maketitle

\section{Introduction}
Let $n\ge2$ and consider the Kepler manifold in $\CC^{n+1}$ defined~by
$$ \HH := \{z\in\CC^{n+1}: z\cb z=0, \; z\neq0 \}, $$
where $z\cb w:=z_1 w_1+\dots+z_{n+1}w_{n+1}$. This is the orbit of the vector
$e=(1,i,0,\dots,0)$ under the $\OnC$-action on $\CC^{n+1}$; it~is also
well-known that $\HH$ can be identified with the cotangent bundle of the unit
sphere $\bS^n$ in $\RR^{n+1}$ minus its zero section. The~unit ball of~$\HH$,
$$ \MM := \{ z\in\HH: |z|^2 = z\cb\oz <1 \}  $$
as~well as its boundary $\pM=\{z\in\HH:|z|=1\}$ are invariant under
$\OnC$ $\cap U(n+1)=\OnR$, and in fact $\pM$ is the orbit of $e$ under $\OnR$.
In~particular, there is a unique $\OnR$-invariant probability measure $d\mu$
on~$\pM$, coming from the Haar measure on the (compact) group $\OnR$.
Explicitly, denoting
$$ \alpha := (n+1) \frac{(-1)^{j-1}}{z_j}\,dz_1\wedge\dots\wedge\widehat{dz_j}
 \wedge\dots\wedge dz_{n+1} \qquad\text{on } z_j\neq0   $$
(this~is, up~to constant factor, the unique $S\OnC$-invariant holomorphic
$n$-form on~$\HH$, see~\cite{OPY}) and defining a $(2n-1)$-form $\omega$ on
$\pM$~by 
$$ \omega(z)(V_1,\dots,V_{2n-1}) := \alpha(z)\wedge\overline{\alpha(z)}
 (z,V_1,\dots,V_{2n-1}), \qquad V_1,\dots,V_{2n-1}\in T(\pM),  $$
we~then have $d\mu=\omega/\omega(\pM)$ (where, abusing notation, we~denote
by $\omega$ also the measure induced by $\omega$ on~$\pM$).
It~follows, in~particular, that $d\mu$ is also invariant under complex
rotations 
$$ z\mapsto\epsilon z, \qquad \epsilon\in\TT=\{z\in\CC:|z|=1\}.   $$
For~a~finite (nonnegative Borel) measure $d\rho$ on $(0,\infty)$,
we~can therefore define a rotation-invariant measure $\drtm$ on $\HH$~by
$$ \intH f \,\drtm := \int_0^\infty \intpM f(t\zeta) \, d\mu(\zeta)
 \, d\rho(t),  $$
and the (weighted) Bergman space
$$ A^2_\rtm := \{ f\in L^2(\drtm): f\text{ is holomorphic on }R\MM \},  $$
where $R:=\sup\{t>0:t\in\operatorname{supp}\rho\}=\sup\{|z|:
z\in\operatorname{supp}\rtm\}$ (with the understanding that
$R\MM=\HH$ if $R=+\infty$). It~is standard that $A^2_\rtm$ is a reproducing
kernel Hilbert space, that~is, there exists a function $K_\rtm(x,y)\equiv
K(x,y)$ on $R\MM\times R\MM$ for which $K(\cdot,y)\in A^2_\rtm$ for each~$y$,
$K(y,x)=\overline{K(x,y)}$, and
$$ f(z) = \int_{R\MM} f(w) K(z,w) \, \drtm(w) \qquad \forall f\in A^2_\rtm.  $$

Our~goal in this paper is to give a description of these reproducing kernels,
establish their asymptotics as $\rho$ varies in a certain~way
(so-called Tian-Yau-Zelditch, or~TYZ, expansion), and study the Hankel
operators on~$A^2_\rtm$. We~also give an analogous description for the
reproducing kernels on the minimal ball
$$ \BB := \{ z\in\CC^n: |z|^2+|z\cb z|<1 \} ,  $$
which is the image of $\MM$ under the 2-sheeted proper holomorphic mapping
given by the projection onto the first $n$ coordinates.

In~more detail, let
$$ q_k := \int_0^\infty t^k \, d\rho(t)   $$
be the moments of the measure $d\rho$ (the~values $q_k=+\infty$ being also
allowed if the integral diverges). Our~starting point is the following formula
for the reproducing kernels (whose proof goes by arguments which are already
quite standard).

\begin{theorem} \label{pPA} For $z,w\in R\MM$ with $R$ as above,
$$ K_\rtm(z,w) = \suml \frac{(z\cb\ow)^l}{d_l},   $$
with
\[ d_l := \frac{q_{2l}}{N(l)}  \label{tDL}    \]
where
$$ N(l) := \binom{l+n-1}{n-1} + \binom{l+n-2}{n-1} 
 = \frac{(2l+n-1)(l+n-2)!}{l!(n-1)!}.   $$
\end{theorem}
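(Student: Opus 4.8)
The plan is to exploit the two invariances of the measure $\drtm$: the circular invariance $z\mapsto\epsilon z$ ($\epsilon\in\TT$) noted in the introduction, and the $\OnR$-invariance inherited from $d\mu$. First I would expand any $f\in A^2_\rtm$ into its homogeneous components $f=\suml f_l$, with $f_l$ holomorphic and homogeneous of degree $l$ on $R\MM$. Since $\drtm$ is $\TT$-invariant and $f_l(\epsilon z)=\epsilon^l f_l(z)$, the pairing of two components of distinct degrees satisfies $\spr{f_l,f_m}=\epsilon^{l-m}\spr{f_l,f_m}$ for all $\epsilon\in\TT$, hence vanishes; so $A^2_\rtm=\bigoplus_l\mathcal A_l$ orthogonally, where $\mathcal A_l$ is the degree-$l$ homogeneous part. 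The elements of $\mathcal A_l$ are restrictions to $\HH$ of degree-$l$ homogeneous polynomials, and because $z\cb z\equiv0$ on $\HH$ the Fischer decomposition $\mathcal P_l=\bigoplus_{k\ge0}(z\cb z)^k\mathcal H_{l-2k}$ collapses on the cone to its harmonic part $\mathcal H_l$ alone. The restriction map $\mathcal H_l\to\mathcal A_l$ is injective (a harmonic polynomial vanishing on the irreducible quadric $z\cb z=0$, $n\ge2$, would be divisible by $z\cb z$, contradicting the direct-sum decomposition), so $\dim\mathcal A_l=\binom{l+n}n-\binom{l+n-2}n=N(l)$ by a Pascal identity.

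Next I would separate the radial and spherical integrations. Writing $K=\suml K_l$ with $K_l$ the reproducing kernel of $\mathcal A_l$, it suffices to find each $K_l$. For $f_l\in\mathcal A_l$ homogeneity gives $\|f_l\|^2_\rtm=\int_0^\infty t^{2l}\,d\rho(t)\intpM|f_l|^2\,d\mu=q_{2l}\intpM|f_l|^2\,d\mu$, so the $A^2_\rtm$-norm on $\mathcal A_l$ is $q_{2l}^{1/2}$ times the $L^2(\pM,d\mu)$-norm. Consequently $K_l=q_{2l}^{-1}\widetilde K_l$, where $\widetilde K_l$ is the reproducing kernel of $\mathcal A_l$ inside $L^2(\pM,d\mu)$.

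It remains to identify $\widetilde K_l$. The key point is that for fixed $w\in\HH$ the function $z\mapsto(z\cb\ow)^l$ lies in $\mathcal A_l$: it is holomorphic and homogeneous of degree $l$, and $\Delta_z(z\cb\ow)^l=l(l-1)(\ow\cb\ow)(z\cb\ow)^{l-2}=0$ since $\ow\cb\ow=\overline{w\cb w}=0$. Because $d\mu$ is $\OnR$-invariant and $\mathcal A_l$ is an irreducible $\OnR$-module, a change of variables shows that $f\mapsto\intpM f(\zeta)(w\cb\overline\zeta)^l\,d\mu(\zeta)$ maps $\mathcal A_l$ to itself and intertwines the $\OnR$-action; by Schur's lemma it is $c_l^{-1}$ times evaluation at $w$, which forces $\widetilde K_l(z,w)=c_l\,(z\cb\ow)^l$. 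To pin down $c_l$ I would integrate the diagonal: on one hand $\intpM\widetilde K_l(z,z)\,d\mu(z)=\dim\mathcal A_l=N(l)$, while on the other $\widetilde K_l(z,z)=c_l|z|^{2l}=c_l$ on $\pM$ and $\mu$ is a probability measure, whence $c_l=N(l)$. Combining the three steps gives $K_l(z,w)=\tfrac{N(l)}{q_{2l}}(z\cb\ow)^l$, and summing over $l$ yields the asserted formula with $d_l=q_{2l}/N(l)$.

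The main obstacle I anticipate lies in the third step: rigorously justifying that $\mathcal A_l$ is $\OnR$-irreducible, so that Schur's lemma applies and $\widetilde K_l$ is forced to be a multiple of the single invariant $(z\cb\ow)^l$; this rests on the classical fact that the holomorphic harmonics $\mathcal H_l$ form an irreducible $\OnC$-module whose restriction to the compact real form stays irreducible. Underlying the whole argument is the completeness claim that every $f\in A^2_\rtm$ is the $L^2$-limit of its homogeneous partial sums, so that $\bigoplus_l\mathcal A_l$ exhausts $A^2_\rtm$; once these structural facts are secured, the norm separation and the dimension count are routine.
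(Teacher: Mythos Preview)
Your proposal is correct and structurally parallel to the paper's proof: both use the homogeneous decomposition $A^2_\rtm=\bigoplus_l\PP_l$, factor the norm on $\PP_l$ as $q_{2l}$ times the $L^2(\pM,d\mu)$-norm, and then identify the $L^2(d\mu)$-reproducing kernel of $\PP_l$ as $N(l)(z\cb\ow)^l$. The difference lies in how that last identification is obtained. The paper quotes the integral identity
\[
\intpM (z\cb w)^k (\xi\cb\ow)^l \,d\mu(w) = \delta_{kl}\,\frac{(z\cb\xi)^k}{N(k)}
\]
from \cite{OPY}, \cite{MY} (derived there via the Funke--Hecke formula) and then verifies directly, inside an auxiliary Hilbert space $A^2_\bs$, that $\sum_k\frac{N(k)}{s_k}(x\cb\oy)^k$ has the reproducing property. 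You instead deduce the form of $\widetilde K_l$ abstractly: $(z\cb\ow)^l\in\PP_l$ because $\ow\cb\ow=0$, the integral operator with this kernel is an $\OnR$-intertwiner on the irreducible module $\PP_l$, so by Schur it is a scalar; the scalar is then fixed by the trace identity $\intpM\widetilde K_l(z,z)\,d\mu(z)=\dim\PP_l=N(l)$. Your route is more self-contained and representation-theoretic, avoiding the external Funke--Hecke citation; the paper's route is shorter once that identity is granted, and it also packages the convergence and density issues cleanly via the $A^2_\bs$ framework and the homogeneous expansion lemma from \cite{MY}. The two structural points you flag as obstacles---irreducibility of $\PP_l$ under $\OnR$ and completeness of $\bigoplus_l\PP_l$ in $A^2_\rtm$---are exactly the facts the paper imports from \cite{Mull} and \cite{MY}, respectively, so you have correctly located where the real work sits.
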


Next, recall that, quite generally, for an $n$-dimensional complex manifold $M$
and a holomorphic line bundle $L$ over $M$ equipped with a Hermitian metric,
the so-called Kempf distortion functions $\epsilon_l$, $l=0,1,2,\dots$,
are~defined~by
$$ \epsilon_l(z) := \sum_j h(s_j(z),s_j(z)),  $$
where $\{s_j\}_j$ is an orthonormal basis of the Hilbert space
$L^2_{\text{hol}}(L^{\otimes l},\omega^n)$ of holomorphic sections of the
$l$-th tensor power $L^{\otimes l}$ of~$L$ square-integrable with respect
to the volume element $\omega^n$ on~$M$, where $\omega=-\operatorname{curv}h$
(which is assumed to be positive); see~Kempf~\cite{Kem}, Rawnsley~\cite{Rawn},
Ji~\cite{Ji} and Zhang~\cite{Zha}. These functions are of importance in the
study of projective embeddings and constant scalar curvature metrics
(Donaldson~\cite{Don}), where a prominent role is played, in~particular,
by~their asymptotic behaviour as $l$ tends to infinity: namely, one~has
$$ \epsilon_l(z) \approx l^n \sumj a_j(z) l^{-j} \qquad\text{as }l\to+\infty $$
in~the $C^\infty$-sense, with some smooth coefficient functions~$a_j(z)$,
and $a_0(z)=1$. This has been established in various contexts by
Berezin~\cite{Ber} (for~bounded symmetric domains), Tian~\cite{Tian} and
Ruan~\cite{Ruan} (answering a conjecture of~Yau) and Catlin~\cite{Catl} and
Zelditch~\cite{Zeld} for $M$ compact, Engli\v s~\cite{E32} (for~$M$ a strictly
pseudoconvex domain in $\CC^n$ with smooth boundary and $h$ subject to some
technical hypotheses), etc. If~$M$ is a domain in $\CC^n$ and the line bundle
is trivial (which certainly happens if $M$ is simply connected; in~this case
one need not restrict to integer~$l$, but may allow it to be any positive
number), one~can identify (holomorphic) sections of $L$ with (holomorphic)
functions on~$M$, $h$~with a positive smooth weight on~$M$,
$L^2_{\text{hol}}(L^{\otimes l},\omega^n)$ with $A^2_{h^l\omega^n}=
A^2_{h^l\det[\ddlh]}$, and
\[ \epsilon_l(z) = h(z)^l K_{h^l\det[\ddlh]}(z,z),   \label{tDE}  \]
where we (momentarily) denote by $K_w$ the weighted Bergman kernel with respect
to a weight $w$ on~$M$ (and~similarly for~$A^2_w$).
In~the context of our Kepler manifold, this has been studied by Gramchev
and~Loi \cite{GrL} for $L=\HH\times\CC$ (i.e.~the~trivial bundle) and
$h(z)=e^{-|z|}$ (so~$\omega=\frac i2\partial\dbar|z|$; this turns out to be
the symplectic form inherited from the isomorphism $\HH\cong T^*\bS^n\setminus
\{\text{zero section}\}$ mentioned in the beginning of this paper~\cite{Rawn}),
who~showed that
\[ \epsilon_l(z) = l^n + \frac{(n-2)(n-1)}{2|z|} l^{n-1} +
 \sum_{k=2}^{n-2} \frac{b_k}{|z|^k} l^{n-k} + R_l(z),   \label{tDY}   \]
with some constants $b_k$ independent of $z$ and remainder term
$R_l(z)=O(e^{-cl|z|})$ for some $c>0$ (i.e.~exponentially small).

Our~second main result is the following.

\begin{theorem} \label{pPB}
Let $K_s=K_\rtm$ for
\[ d\rho(t) = 2cm e^{-st^{2m}} t^{2mn-1} \, dt,  \label{tDR}  \]
where $c,m$ are fixed positive constants and $s>0$. Then as $s\to+\infty$,
\[ e^{-s|z|^{2m}} K_s(z,z) = \frac{2m^n}{(n-1)!c} s^n \sum_{j=0}^{n-1}
 \frac{b_j}{s^j|z|^{2mj}} + R_s(z),  \label{tDQ}   \]
where $b_j$ are constants depending on $m$ and $n$ only,
$$ b_0=1, \quad b_1=\frac{(1-n)(mn-n+1)}{2m} ,  $$
and $R_s(z)=O(e^{-\delta s|z|^{2m}})$ with some $\delta>0$.
\end{theorem}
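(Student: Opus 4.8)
The plan is to evaluate the kernel of Theorem~\ref{pPA} explicitly for the weight~\eqref{tDR} and then extract the large-$s$ behaviour from a Hankel-contour representation of $1/\Gamma$. First I would compute the even moments: substituting $w=t^{2m}$ in $q_{2l}=\int_0^\infty t^{2l}\,d\rho(t)$ turns the integral into a Gamma integral, giving $q_{2l}=c\,s^{-(l/m+n)}\,\Gamma\!\left(\frac lm+n\right)$. Feeding this into $d_l=q_{2l}/N(l)$ and specializing Theorem~\ref{pPA} to the diagonal (where $z\cb\oz=|z|^2$) yields, with $x:=s^{1/m}|z|^2$ and $\lambda:=x^m=s|z|^{2m}$,
$$ e^{-s|z|^{2m}}K_s(z,z)=\frac{s^n}{c}\,e^{-\lambda}\,G(x),\qquad
 G(x):=\suml\frac{N(l)}{\Gamma(l/m+n)}\,x^l, $$
so that the whole problem reduces to the behaviour of the entire function $G$ as $x\to+\infty$.

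Next I would turn $G$ into a contour integral. Using the Hankel representation $1/\Gamma(l/m+n)=\frac1{2\pi i}\oint_H e^u u^{-l/m-n}\,du$, interchanging sum and integral, and summing the resulting geometric-type series by means of the elementary identity $\suml N(l)y^l=\frac{1+y}{(1-y)^n}$ (valid for $|y|<1$, which holds with $y=xu^{-1/m}$ on a Hankel contour of sufficiently large radius), I obtain
$$ G(x)=\frac1{2\pi i}\oint_H e^u\,u^{-n}\,
 \frac{1+xu^{-1/m}}{(1-xu^{-1/m})^n}\,du. $$
The integrand is meromorphic away from the cut of $u^{-1/m}$, with a single pole of order $n$, located at $u=x^m$, coming from the factor $(1-xu^{-1/m})^{-n}$.

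I would then deform the contour across this pole. Shrinking the Hankel loop from radius $>x^m$ to radius $<x^m$ collects the residue at $u=x^m$, so that $G(x)=\operatorname{Res}_{u=x^m}[\cdots]+\frac1{2\pi i}\oint_{H'}[\cdots]\,du$. Since the pole has order $n$, the residue is $\frac1{(n-1)!}$ times an $(n-1)$-st derivative of $e^u\times(\text{holomorphic})$ at $u=x^m$; differentiating $e^u$ produces exactly $e^{x^m}$ times a polynomial in $x$, so after multiplication by $e^{-\lambda}=e^{-x^m}$ this residue collapses to the \emph{finite} series in~\eqref{tDQ}. A short computation at the pole, most transparent after the substitution $u=v^m$ (which moves the pole to $v=x$), evaluates the holomorphic factor and its derivatives there: the leading term gives $2m^n/(n-1)!$, fixing $b_0=1$, and the next derivative recovers the stated value $b_1=\frac{(1-n)(mn-n+1)}{2m}$; note that $x^{-mj}=s^{-j}|z|^{-2mj}$ matches the powers $1/(s^j|z|^{2mj})$ of~\eqref{tDQ}. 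On the residual contour $H'$ one has $\operatorname{Re}u=O(1)$, so $|e^u|$ is bounded while the rational factor contributes an algebraic-in-$x$ bound; hence $\frac{s^n}{c}e^{-\lambda}\oint_{H'}=O\!\big(s^n\,x^{1-n}e^{-\lambda}\big)$, which is $O(e^{-\delta s|z|^{2m}})$ for any $\delta<1$ once the polynomial factors are absorbed, giving the remainder~$R_s$.

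The main obstacle I anticipate is not the leading order but making the expansion \emph{exact and finite} with a genuinely exponentially small remainder: the naive route through the known Mittag--Leffler asymptotics of $\suml x^l/\Gamma(l/m+n)$ yields only an asymptotic series with an $O(x^{-p-1})$ error, whereas~\eqref{tDQ} asserts termination after $n$ terms and an $e^{-\delta s|z|^{2m}}$ tail. The contour-deformation argument is precisely what makes the finiteness and the exponential smallness rigorous, so the delicate points are the justification of the term-by-term interchange on a large-radius Hankel contour, the branch-cut bookkeeping for $u^{-1/m}$ when $m\notin\NN$, and the explicit higher-derivative residue computation needed to pin down the coefficients such as $b_1$.
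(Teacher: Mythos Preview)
Your argument is correct but follows a different route from the paper. Both start from Theorem~\ref{pPA} and the same moment computation, reducing everything to the large-$x$ behaviour of $G(x)=\suml N(l)\,x^l/\Gamma(l/m+n)$ with $x=s^{1/m}|z|^2$. The paper, however, does not use your Hankel-contour/residue method; instead it rewrites $G$ via the Leibniz rule as a differential operator of order $n-1$ applied to the Mittag--Leffler function $E_{1/m,n}$ and then invokes the asymptotic $E_{\alpha,\beta}(t)=\frac1\alpha t^{(1-\beta)/\alpha}e^{t^{1/\alpha}}+O(e^{(1-\delta)t^{1/\alpha}})$ for $t>0$, which may be differentiated termwise. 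Since every derivative of $t^{(1-n)m}e^{t^m}$ equals $e^{t^m}$ times a polynomial in~$t^m$ (given by an explicit recursion), the finite sum and the exponentially small remainder fall out directly. Thus the obstacle you anticipated does not actually arise: on the positive axis the Mittag--Leffler expansion separates into a single explicit term $\frac1\alpha t^{(1-\beta)/\alpha}e^{t^{1/\alpha}}$ plus an \emph{exponentially} subdominant remainder (the algebraic $O(1/z)$ tail you have in mind is itself negligible next to $e^{t^m}$), and it is this that forces termination after $n$ terms. Your contour argument reaches the same conclusion by a different mechanism --- the finiteness coming from the pole at $u=x^m$ having order exactly~$n$ --- and is more self-contained, at the cost of the branch-cut and interchange technicalities you flag; the paper's version trades those for a citation to the Mittag--Leffler literature and a short recursion for the coefficient polynomials.
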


The~result of Gramchev and Loi corresponds to $m=\frac12$ and $c=\frac{2^{1-n}}
{(n-1)!}$, so~it is recovered as a special case. Our~method of proof is a good
deal simpler than in~\cite{GrL}, covers all $m>0$, and is also extendible to
other situations. We~further note that $\drtm$ with the $\rho$ from~\eqref{tDR}
actually coincides (up~to a constant factor) with $\omega^n$ for $\omega=\frac
i2 \partial\dbar(s|z|^{2m})$, in~full accordance with~\eqref{tDE}
(taking~$h(z)=e^{-|z|^{2m}}$, and with $l=1,2,\dots$ replaced by the continous
parameter $s>0$ as already remarked above). 

There is also an analogous result for the Bergman-type weights on~$\MM$
corresponding to $d\rho(t)=\chi_{[0,1]}(t)(1-t^2)^s t^{2n-1}\,dt$, $s>-1$. 

As~for our last topic, recall that the Hankel operator~$\Ho g$,
$g\in A^2_\rtm$, is~the operator from $A^2_\rtm$ into $L^2(\rtm)$ defined~by
$$ \Ho g f := (I-P)(\overline g f),   $$
where $P:L^2(\rtm)\to A^2_\rtm$ is the orthogonal projection.
This is a densely defined operator, which~is (extends~to~be)
bounded e.g.~whenever $g$ is bounded.
For~the (analogously defined) Hankel operators on the unit disc $\DD$ in~$\CC$
or the unit ball $\BB^n$ of~$\CC^n$, $n\ge2$, criteria for the membership of
$\Ho g$ in the Schatten classes~$\cS^p$, $p>0$, were given in the classical
papers by Arazy, Fisher and Peetre \cite{AFPD}~\cite{AFPB}: it~turns out that
for $p\le1$ there are no nonzero $\Ho g$ in $\cS^p$ on~$\DD$, while for $p>1$,
$\Ho g\in\cS^p$ if~and only~if $g\in B^p(\DD)$, the $p$-th order Besov space
on~$\DD$; while on~$\BB^n$, $n\ge2$, there are no nonzero $\Ho g$ in $\cS^p$
if $p\le 2n$, while for $p>2n$, again $\Ho g\in\cS^p$ if~and only~if
$g\in B^p(\BB^n)$. One~says that there is a cut-off at $p=1$ or $p=2n$,
respectively. The~result remains in force also for $\DD$ and $\BB^n$ replaced
any bounded strictly-pseudoconvex domain in $\CC^n$, $n\ge1$, with smooth
boundary (Luecking~\cite{Lue}). 

Our~third main result shows that for the Bergman space
$A^2(\MM):=A^2_\rtm$ for $d\rho(t)=\chi_{[0,1]}(t) t^{2n-1}dt$ on~$\MM$,
there is also a cut-off at $p=2n$.

\begin{theorem} \label{pPC}
Let $p\ge1$. Then the following are equivalent.
\begin{itemize}
\item[(i)] There exists nonconstant $g\in A^2(\MM)$ with $\Ho g\in\cS^p$.
\item[(ii)] There exists a nonzero homogeneous polynomial $g$ of degree $m\ge1$
such that $\Ho g\in\cS^p$.
\item[(iii)] There exists $m\ge1$ such that $\Ho g\in\cS^p$ for all homogeneous
polynomials $g$ of degree~$m$.
\item[(iv)] $p>2n$.
\item[(v)] $\Ho g\in\cS^p$ for any polynomial~$g$.
\end{itemize}
\end{theorem}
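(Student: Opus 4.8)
The plan is to prove the cycle $(iv)\Rightarrow(v)\Rightarrow(iii)\Rightarrow(ii)\Rightarrow(i)\Rightarrow(iv)$. The implications $(v)\Rightarrow(iii)\Rightarrow(ii)\Rightarrow(i)$ are immediate: $\PP_m\neq0$ for $m\ge1$ (e.g.\ $z_1^m$), and a nonzero homogeneous polynomial of degree $m\ge1$ is nonconstant and lies in $A^2(\MM)$ since $\MM$ is bounded and $\rtm$ finite. All the content sits in the \emph{sufficiency} $(iv)\Rightarrow(v)$ and the \emph{cut-off} $(i)\Rightarrow(iv)$, and both reduce to homogeneous symbols. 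For sufficiency this is linearity in the symbol, $\Ho g=\sum_m\Ho{g_m}$ for $g=\sum_m g_m$ with $g_m\in\PP_m$, together with the ideal property of $\cS^p$. For the cut-off I would conjugate by the rotation unitaries $U_\epsilon f:=f(\epsilon\,\cdot)$, $\epsilon\in\TT$, which commute with $P$ and give $U_\epsilon^{-1}\Ho{g_m}U_\epsilon=\epsilon^m\Ho{g_m}$; hence $\Ho{g_m}=\int_\TT\overline\epsilon^{\,m}\,U_\epsilon^{-1}\Ho gU_\epsilon\,\tfrac{|d\epsilon|}{2\pi}$, and since $p\ge1$, Minkowski's integral inequality for the Schatten norm yields $\|\Ho{g_m}\|_{\cS^p}\le\|\Ho g\|_{\cS^p}$. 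Thus $\Ho g\in\cS^p$ forces $\Ho{g_m}\in\cS^p$ for every component, and some $g_m\neq0$ with $m\ge1$ because $g$ is nonconstant. In both directions it therefore suffices to treat a fixed $g\in\PP_m$, $m\ge1$.

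Next I would exploit the orthogonal decomposition $A^2(\MM)=\bigoplus_{l\ge0}\PP_l$ afforded by Theorem~\ref{pPA}, where $\PP_l$ is the space of degree-$l$ homogeneous holomorphic functions on $\HH$, with $\dim\PP_l=N(l)\sim\frac{2}{(n-1)!}\,l^{n-1}$. Since $\overline g\,\PP_l$ has $\TT$-weight $l-m$, the images $\Ho g\PP_l$ are mutually orthogonal, so $\Ho g=\bigoplus_l A_l$ with $A_l:=\Ho g|_{\PP_l}$ and $\|\Ho g\|_{\cS^p}^p=\sum_l\operatorname{tr}\big((A_l^*A_l)^{p/2}\big)$; everything reduces to the spectra of the finite-rank operators $A_l^*A_l$ on $\PP_l$. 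Using $\|h\|_{A^2}^2=q_{2l}\|h\|_{L^2(\pM)}^2$ on $\PP_l$ and letting $\widetilde M_g\colon\PP_d\to\PP_{d+m}$ be multiplication by $g$ in the boundary norms, a direct computation of $\|\overline g f\|^2-\|P(\overline gf)\|^2$ gives the exact identity
\[ A_l^*A_l=\frac{m^2}{(l+n)(l+m+n)}\,G_l'+\frac{l+n}{l+m+n}\,\Lambda_l, \]
where $G_l:=\widetilde M_g^*\widetilde M_g=\Pi_l M_{|g|^2}$ and $G_l':=\widetilde M_g\widetilde M_g^*=M_g\Pi_{l-m}M_{\overline g}$ are the two compressions of multiplication by $|g|^2$ to $\PP_l$, and $\Lambda_l:=G_l-G_l'=(h_g^*h_g)|_{\PP_l}\succeq0$ is the restriction to $\PP_l$ of the \emph{boundary} (Szeg\H o) Hankel operator $h_g=(I-\Pi)M_{\overline g}$, $\Pi$ being the Szeg\H o projection on $L^2(\pM)$. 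Both summands are positive, and $G_l'$ is bounded by $\sup_\pM|g|^2$, so the whole behaviour is governed by $\Lambda_l$.

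For $\Lambda_l$ I would establish two facts. First, an \emph{exact} trace: since the Szeg\H o density $\sum_\alpha|e_\alpha|^2$ of an orthonormal basis of $(\PP_l,\|\cdot\|_{L^2(\pM)})$ is the constant $N(l)$ by $\OnR$-invariance, one obtains $\operatorname{tr}\Lambda_l=\operatorname{tr}G_l-\operatorname{tr}G_l'=(N(l)-N(l-m))\,\|g\|_{L^2(\pM)}^2\sim c\,l^{n-2}$ with $c>0$ (here $\|g\|_{L^2(\pM)}>0$ for $g\neq0$, and $n\ge2$). Second — and this is the main obstacle — the operator-norm bound
\[ \|\Lambda_l\|_{\mathrm{op}}=\sup_{f\in\PP_l}\frac{\|(I-\Pi)(\overline g f)\|_{L^2(\pM)}^2}{\|f\|_{L^2(\pM)}^2}\le\frac{C}{l}. \]
This says that on high-degree pieces multiplication by $\overline g$ differs from a holomorphic function by $O(l^{-1/2})$ in $L^2(\pM)$, reflecting the half-order tangential Cauchy--Riemann scaling on the CR manifold $\pM$. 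I expect to prove it either by $\OnR$-harmonic analysis on $\pM$ (decomposing $\PP_l|_\pM$ into isotypic components and diagonalising the intertwiners $\widetilde M_g,\widetilde M_g^*$, whose Clebsch--Gordan-type coefficients carry the decisive $1/l$ factor), or by a Schur test for the kernel $N(l)(z\cb\ow)^l$, which concentrates near the diagonal at scale $l^{-1/2}$ while $g$ varies at scale $O(1)$.

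Finally I would combine the two facts by an elementary counting lemma: if the eigenvalues of $\Lambda_l$ lie in $[0,C/l]$ and sum to at least $c\,l^{n-2}$, then at least $c'\,l^{n-1}$ of them exceed $c''/l$. Together with $\operatorname{rank}A_l^*A_l\le N(l)$ and $\|A_l^*A_l\|_{\mathrm{op}}=O(1/l)$ for the upper estimate, and with $A_l^*A_l\succeq\tfrac12\Lambda_l$ for the lower one, this gives the two-sided bound $\operatorname{tr}\big((A_l^*A_l)^{p/2}\big)\asymp l^{\,n-1-p/2}$. Hence
\[ \|\Ho g\|_{\cS^p}^p=\sum_{l}\operatorname{tr}\big((A_l^*A_l)^{p/2}\big)<\infty \iff n-1-\tfrac p2<-1 \iff p>2n. \]
The upper bound, summed over the finitely many homogeneous components of a polynomial, yields $(iv)\Rightarrow(v)$; the lower bound, applied to a nonvanishing component $g_m$ of the reduced symbol, yields $(i)\Rightarrow(iv)$, completing the cycle. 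I expect the operator-norm estimate on $\Lambda_l$ to be the only genuinely hard input; once it is in hand, the exact trace formula makes the eigenvalue count automatic and the cut-off at $p=2n$ drops out.
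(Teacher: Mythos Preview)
Your decomposition $\Ho g=\bigoplus_l A_l$ and the exact identity for $A_l^*A_l$ in terms of $G_l'$ and $\Lambda_l$ are correct, as is the trace formula $\operatorname{tr}\Lambda_l=(N(l)-N(l-m))\|g\|^2_{L^2(\pM)}$. The gap is exactly where you say it is: the operator-norm estimate $\|\Lambda_l\|_{\mathrm{op}}\le C/l$ is asserted but not proven, and the two routes you suggest (Clebsch--Gordan coefficients for $\OnR$, or a Schur test with the kernel $N(l)(z\cb\ow)^l$) are each a substantial piece of work on $\pM$, not a routine verification. Without this bound the sufficiency $(iv)\Rightarrow(v)$ collapses: from the trivial estimate $\|\Lambda_l\|\le\sup_{\pM}|g|^2$ one only gets $\operatorname{tr}(\Lambda_l^{p/2})\le C\,l^{n-2}$, which diverges for every $p$ when $n\ge2$. (For the cut-off direction you could in fact avoid it, since $\cS^p\subset\cS^2$ for $p\le 2$ and Jensen handles $p\ge2$; but your upper bound genuinely needs it.)

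The paper bypasses this difficulty by a device you do not use, and it is worth seeing. Instead of analysing one fixed $g\in\PP_m$, it first inserts the step $(ii)\Rightarrow(iii)$: the representation of $\OnR$ on $\PP_m$ is irreducible (via the isomorphism with spherical harmonics $\cH^m$), so if $\Ho g\in\cS^p$ for one nonzero $g\in\PP_m$ then $\Ho h\in\cS^p$ for \emph{every} $h\in\PP_m$, in particular for each monomial $z^\alpha$, $|\alpha|=m$. Now the multinomial identity $\sum_{|\alpha|=m}\binom m\alpha |z^\alpha|^2=|z|^{2m}$ makes the positive combination
\[
H:=\sum_{|\alpha|=m}\binom m\alpha \Ho{z^\alpha}^*\Ho{z^\alpha}
\]
rotation-invariant, hence an exact \emph{scalar} on each $\PP_l$. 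A short moment computation with $q_{2k}=\tfrac1{2(k+n)}$ gives $H|_{\PP_l}=\big(\tfrac{q_{2l+2m}}{q_{2l}}-\tfrac{d_l}{d_{l-m}}\big)I\sim\tfrac{(n-1)m}{l}\,I$, so $H\in\cS^{p/2}\iff\sum_l l^{-p/2}N(l)<\infty\iff p>2n$. This settles both directions at once: $(iii)\Rightarrow(iv)$ because $H\in\cS^{p/2}$; and $(iv)\Rightarrow(v)$ because each summand satisfies $0\le\Ho{z^\alpha}^*\Ho{z^\alpha}\le H$, whence $\Ho{z^\alpha}\in\cS^p$ individually. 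No spectral information about $\Lambda_l$ for a single symbol is ever required; the averaging over $\alpha$ replaces your hard estimate by an exact eigenvalue.
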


We~remark that the proofs in \cite{AFPD} and \cite{AFPB} relied on the
homogeneity of $\BB^n$ under biholomorphic self-maps, and thus are not
directly applicable for $\MM$ with its much smaller automorphism group.
The~proof in \cite{Lue} relied on $\dbar$-techniques, which probably 
could be adapted to our case of~$\MM$ i.e.~of a smoothly bounded strictly
pseudoconvex domain not in $\CC^n$ but in a complex manifold, and 
furthermore having a singularity in the interior (cf.~Ruppenthal~\cite{Rup}).
Our~method of proof of Theorem~\ref{pPC}, which is close in spirit to those
of \cite{AFPD} and~\cite{AFPB}, is, however, much more elementary.

The~paper is organized as follows. The~proof of Theorem~\ref{pPA},
together with miscellaneous necessary prerequisites and the results
for the minimal~ball, occupies Section~2. Applications to the TYZ expansion
appear in Section~3, and the results on Hankel operators in Section~4.
The~final section, Section~5, concludes by a small observation concerning
the so-called balanced metrics on~$\HH$ in the sense of Donaldson~\cite{Don}.

Large part of this work was done while the second named author was visiting
the other two; the~hospitality of Universite Aix Marseille in this
connection is gratefully acknowledged.

\section{Reproducing kernels}
Denote by $\PP_k$, $k=0,1,2,\dots$, the~space of (restrictions to~$\HH$~of)
polynomials on $\CC^{n+1}$ homogeneous of degree~$k$. Clearly, $\PP_k$~is
isomorphic to the quotient of the analogous space of $k$-homogeneous
polynomials on all of~$\CC^n$ by the $k$-homogeneous component of the ideal
generated by $z\cb z$, and
$$ \dim \PP_k = \binom{k+n-1}{n-1}+\binom{k+n-2}{n-1} =: N(k).   $$
For $z\in\MM$ and $f\in L^2(\bS^n,d\sigma)$, where $\sigma$ stands for the
normalized surface measure on~$\bS^n$, set
\[ \hat f(z) := \int_{\bS^n} f(\zeta) e^{\spr{z,\zeta}}\,d\sigma(\zeta),
 \label{tDW}  \]
and let $\cH^k\equiv\cH^k(\bS^n)$ denote the subspace in $L^2(\bS^n,d\sigma)$
consisting of spherical harmonics of degree~$k$. It~is then known \cite{Ii}
\cite{Wada} that the functions $x\mapsto(x\cb z)^k$, $z\in\HH$, span~$\cH^k$,
the~functions $z\mapsto(z\cb\zeta)^k$, $\zeta\in\bS^n$, span~$\PP_k$, and the
mapping $f\mapsto\hat f$ is an isomorphism of $\cH^k$ onto~$\PP_k$,
for~each~$k$. Using the Funke-Hecke theorem \cite[p.~20]{Mull},
it~then follows that \cite{OPY}~\cite{MY}
\[ \intpM (z\cb w)^k (\xi\cb\ow)^l \,d\mu(w)
 = \delta_{kl} \frac{(z\cb\xi)^k}{N(k)}   \label{tTA}   \]
for all $z\in\HH$, $\xi\in\CC^{n+1}$; and, consequently, $\PP_k$~and $\PP_l$
are orthogonal in $L^2(d\mu)$ if $k\neq l$, while
\[ \intpM f(w) (z\cb\ow)^k \,d\mu(w) = \frac{f(z)}{N(k)}  \label{tTB}  \]
for all $z\in\HH$ and $f\in\PP_k$.

If~$f$ is a function holomorphic on $R\MM$ (for~some $0<R\le\infty$),
then it has a unique decomposition of the form
\[ f = \sumk f_k, \qquad f_k\in\PP_k ,   \label{tDC}   \]
with the sum converging absolutely and uniformly on compact subsets of~$R\MM$
(\cite[Lemma~3.1]{MY}). Let~$\bs=(s_0,s_1,\dots)$ be an arbitrary sequence of
positive numbers. We~denote by $A^2_\bs$ the space of all functions $f$
holomorphic in some $R\MM$, $R>0$, for~which,
$$ \|f\|_\bs^2 := \sum s_k \|f_k\|^2_{L^2(\pM,d\mu)} < \infty,   $$
equipped with the natural inner product
$$ \spr{f,g}_\bs := \sumk s_k \spr{f_k,g_k}_{L^2(\pM,d\mu)}   $$
for $f=\sum_k f_k$, $g=\sum_k g_k$ as in~\eqref{tDC}. It~is immediate that
$A^2_\bs$ is a Hilbert space which contains each~$\PP_k$, and the linear
span of the latter (i.e.~the~space of all polynomials on~$\HH$) is~dense
in~$A^2_\bs$. 

\begin{proposition} \label{pPD}
Assume that
$$ R_\bs := \liminf_{k\to\infty} \Big|\frac{N(k)}{s_k}\Big|^{-1/2k} >0  $$
$($the~value $R_\bs=\infty$ being also allowed$)$.
Then $A^2_\bs$ is a reproducing kernel Hilbert space of holomorphic functions
on~$R_\bs\MM$, with reproducing kernel
\[ K_\bs(x,y) = \sumk \frac{N(k)}{s_k} (x\cb\oy)^k .  \label{tTC}   \]
\end{proposition}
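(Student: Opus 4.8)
The plan is to exploit the orthogonal decomposition of $A^2_\bs$ into the pieces $\PP_k$ that is built into the very definition of $\spr{\cdot,\cdot}_\bs$: components of different homogeneities never interact, and each $\PP_k$ is finite-dimensional, so it suffices to identify the reproducing kernel of each summand and then sum. The single most useful input is \eqref{tTB}, which says that $\PP_k$, regarded inside $L^2(\pM,d\mu)$, has reproducing kernel $N(k)(x\cb\oy)^k$. Since the $\PP_k$-part of the $A^2_\bs$-norm is $s_k$ times the $L^2(d\mu)$-norm, rescaling should turn this into the $k$-th summand $\frac{N(k)}{s_k}(x\cb\oy)^k$ of the claimed kernel; the whole proof is then a matter of controlling convergence, and here the hypothesis $R_\bs>0$ is exactly what is needed.

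First I would establish the pointwise estimate linking the abstract norm to growth. Using \eqref{tTB} and Cauchy--Schwarz, for $f_k\in\PP_k$ and $x\in\HH$ one has $|f_k(x)|=N(k)\bigl|\int_\pM f_k(w)(x\cb\ow)^k\,d\mu(w)\bigr|\le N(k)\,\|f_k\|_{L^2(d\mu)}\bigl(\int_\pM|x\cb\ow|^{2k}\,d\mu\bigr)^{1/2}$. The remaining integral I would evaluate from \eqref{tTA}: writing $|x\cb\ow|^{2k}=(\overline x\cb w)^k(x\cb\ow)^k$ and noting $\overline x\in\HH$ because $\overline x\cb\overline x=\overline{x\cb x}=0$, \eqref{tTA} gives $\int_\pM|x\cb\ow|^{2k}\,d\mu=|x|^{2k}/N(k)$, whence
\[ |f_k(x)|\le\sqrt{N(k)}\,|x|^{k}\,\|f_k\|_{L^2(d\mu)}. \]
Writing $\|f_k\|_{L^2(d\mu)}^2=s_k^{-1}\bigl(s_k\|f_k\|_{L^2(d\mu)}^2\bigr)$ and applying Cauchy--Schwarz in $k$, this yields $\sum_k|f_k(x)|\le\|f\|_\bs\bigl(\sum_k\frac{N(k)}{s_k}|x|^{2k}\bigr)^{1/2}$. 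The power series $\sum_k\frac{N(k)}{s_k}u^k$ has radius of convergence $(\limsup_k|N(k)/s_k|^{1/k})^{-1}=R_\bs^2$, so this bound is finite and locally uniform precisely for $|x|<R_\bs$. This simultaneously shows that every $f\in A^2_\bs$ is (represented by) a function holomorphic on all of $R_\bs\MM$ and that point evaluation at each $y\in R_\bs\MM$ is a bounded functional, so $A^2_\bs$ is a reproducing kernel Hilbert space on $R_\bs\MM$.

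It then remains to verify that $K_\bs(x,y)=\sum_k\frac{N(k)}{s_k}(x\cb\oy)^k$ is the kernel. Convergence for $x,y\in R_\bs\MM$ follows from $|x\cb\oy|\le|x|\,|y|<R_\bs^2$. For fixed $y$, the $k$-th homogeneous component of $K_\bs(\cdot,y)$ lies in $\PP_k$, and the moment computation above gives $\bigl\|\tfrac{N(k)}{s_k}(\cdot\cb\oy)^k\bigr\|^2_{L^2(d\mu)}=\frac{N(k)}{s_k^2}|y|^{2k}$, whence $\|K_\bs(\cdot,y)\|_\bs^2=\sum_k\frac{N(k)}{s_k}|y|^{2k}<\infty$, so $K_\bs(\cdot,y)\in A^2_\bs$. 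The reproducing property I would check termwise: by orthogonality of the $\PP_k$ together with \eqref{tTB} (using $\overline{(w\cb\oy)^k}=(y\cb\ow)^k$), the $k$-th summand contributes $s_k\cdot\frac{N(k)}{s_k}\cdot\frac{f_k(y)}{N(k)}=f_k(y)$, so $\spr{f,K_\bs(\cdot,y)}_\bs=\sum_k f_k(y)=f(y)$. The Hermitian symmetry $K_\bs(y,x)=\overline{K_\bs(x,y)}$ is immediate, since the coefficients $N(k)/s_k$ are real and $\overline{(x\cb\oy)^k}=(y\cb\overline x)^k$.

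The genuinely substantive step is the first one: producing the sharp bound $|f_k(x)|\le\sqrt{N(k)}\,|x|^k\|f_k\|_{L^2(d\mu)}$ with the correct power of $N(k)$, and recognizing that the resulting series $\sum_k\frac{N(k)}{s_k}|x|^{2k}$ has precisely $R_\bs$ as its radius, so that the $\liminf$ in the definition of $R_\bs$ matches the domain of holomorphy. Everything else---completeness being already granted in the text, and the kernel identity---reduces to the orthogonality of the $\PP_k$ and the two integral formulas \eqref{tTA} and \eqref{tTB}, and I do not anticipate any obstacle there beyond routine bookkeeping of the sums.
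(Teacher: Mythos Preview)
Your proof is correct and follows essentially the same approach as the paper's. The paper likewise uses \eqref{tTB} and Cauchy--Schwarz to bound $\sum_k|f_k(y)|$ by $\|f\|_\bs K_\bs(y,y)^{1/2}$ (which is exactly your bound, since $K_\bs(y,y)=\sum_k\frac{N(k)}{s_k}|y|^{2k}$), computes $\|(\cdot\cb\oy)^k\|_{L^2(d\mu)}^2=|y|^{2k}/N(k)$ via \eqref{tTA} to show $K_\bs(\cdot,y)\in A^2_\bs$, and then reads off the reproducing identity from the same chain of equalities; your presentation merely unpacks the intermediate estimate $|f_k(x)|\le\sqrt{N(k)}\,|x|^k\|f_k\|_{L^2(d\mu)}$ a bit more explicitly.
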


\begin{proof}
By~the definition of~$R_\bs$, the~series \eqref{tTC} converges pointwise and
locally uniformly for $x,y\in R_\bs\MM$. Moreover, for $g=K_\bs(\cdot,y)$
we~plainly have $g_k=\frac{N(k)}{s_k}(\cdot\cb\oy)^k$ and by~\eqref{tTA},
$\|g_k\|^2_{L^2(\pM,d\mu)}=\frac{N(k)}{s_k^2} (y\cb\oy)^k$,~so
$$ \|g\|_\bs^2 = \sumk \frac{N(k)}{s_k}(y\cb\oy)^k = K_\bs(y,y) < \infty.  $$
Thus $K_\bs(\cdot,y)\in A^2_\bs$ for $y\in R_\bs\MM$.
Furthermore, for $f\in A^2_\bs$, by~\eqref{tTB}
\begin{align*}
\sum_k |f_k(y)|
&\le \sum_k \Big| N(k) \intpM f_k(w)(y\cb\ow)^k \, d\mu(w) \Big| \\
&= \sum_k s_k \Big| \Big\langle f_k,\frac{N(k)}{s_k}(\cdot\cb\oy)^k
 \Big\rangle _{L^2(\pM,d\mu)} \Big|   \\
&\le \sum_k s_k \|f_k\|_{L^2(\pM,d\mu)}
 \Big\| \frac{N(k)}{s_k}(\cdot\cb\oy)^k \Big\|_{L^2(\pM,d\mu)}   \\
&\le \Big( \sum_k s_k \|f_k\|^2_{L^2(\pM,d\mu)} \Big)^{1/2}
 \Big( \sum_k \Big\| \frac{N(k)}{s_k}(\cdot\cb\oy)^k \Big\|_{L^2(\pM,d\mu)}^2
 \Big)^{1/2}   \\
&= \|f\|_\bs K_\bs(y,y)^{1/2} , 
\end{align*}
implying that the series $\sum_k f_k$ converges locally uniformly on $R_\bs\MM$
and that the point evaluation $f\mapsto f(y)$ is continuous on $A^2_\bs$ for
all $y\in R_\bs\MM$. Finally, removing absolute values in the last computation
shows that $f(y)=\spr{f,K_\bs(\cdot,y)}_\bs$, proving that $K_\bs$ is the
reproducing kernel for~$A^2_\bs$.
\end{proof}

Recall now that a sequence $\bs=(s_k)_{k\in\NN}$ is called a Stieltjes moment
sequence if it has the form
$$ s_k = s_k(\nu) := \int_0^\infty r^k \, d\nu(r)   $$   %(
for some nonnegative measure $\nu$ on $[0,+\infty)$,    %]
called a representing measure for~$\bs$; or,~alternatively,
$$ s_k = \int_0^\infty t^{2k} \, d\rho(t)   $$
for the nonnegative measure $d\rho(t)=d\nu(t^2)$. These sequences have been
characterized by Stieltjes in terms of a positive definiteness conditions.
It~follows from the above integral representation that each Stieltjes moment
sequence is either non-vanishing, that~is, $s_k>0$ for all~$k$, or~else
$s_k =c\delta_{0k}$ for all $k$ for some $c\ge0$.
Fix~a nonvanishing Stieltjes moment sequence~$\bs=(s_k)$.
By~the Cauchy-Schwarz inequality we see that the sequence $\frac{s_{k+1}}{s_k}$
is~nondecreasing and hence converges as $k\to+\infty$ to the radius of
convergence $R_\bs^2$ of the series  
$$ \sumk \frac{N(k)}{s_k} z^k,   \qquad z\in\CC.  $$ 
We~can now prove our first theorem from the Introduction.

\newtheorem*{proofA}{Proof of Theorem~\ref{pPA}}
\begin{proofA}
Recall that we denoted $R=\sup\{t>0:t\in\operatorname{supp}\rho\}$.
If~$R=\infty$, then for any $r>0$ we have $q_{2k}\ge\int_r^\infty t^{2k}
\,d\rho(t) \ge r^{2k} \rho((r,\infty))$ with $\rho((r,\infty))>0$,
so $\liminf_{k\to\infty} q_{2k}^{1/2k}\ge r$; thus $q_{2k}^{1/2k}\to+\infty$.
If~$R<\infty$, then the same argument shows that $\liminf_{k\to\infty}
q_{2k}^{1/2k}\ge r$ for any $r<R$, while from
$$ q_{2k} \le R^{2k} \int_0^R \,d\rho = R^{2k} \rho([0,R])  $$
with $\rho([0,R])<\infty$ we get $\limsup_{k\to\infty} q_{2k}^{1/2k}\le R$.
Setting $s_k=q_{2k}$ we~thus get in either case
$$ R=R_\bs.  $$
Now for any $r<R$ and $f\in A^2_\rtm$, we~have by the uniform convergence
of~\eqref{tDC} 
\begin{align*}
\int_{r\MM} |f|^2 \,\drtm
&= \sum_{j,k} \int_{r\MM} f_j \overline{f_k} \,\drtm  \\
&= \sum_{j,k} \int_0^r \intpM t^{j+k} f_j(\zeta) \overline{f_k(\zeta)}
 \, d\mu(\zeta) \, d\rho(t)  \\
&= \sum_k \Big( \int_0^r t^{2k} \,d\rho(t) \Big) \|f_k\|^2_{L^2(\pM,d\mu)}
\end{align*}
by~the orthogonality of $\PP_k$ and $\PP_l$ for $j\neq k$.
Letting $r\nearrow R$, we~thus~get
$$ \|f\|^2_\rtm = \sum_k q_{2k} \|f_k\|^2_{L^2(\pM,d\mu)} = \|f\|^2_\bs.  $$
Hence $A^2_\rtm\subset A^2_\bs$, with equal norms. Since clearly $\PP_l\in
A^2_\rtm$ for each $l$ and the span of $\PP_l$ is dense in~$A^2_\bs$,
it~follows that actually $A^2_\rtm=A^2_\bs$ and $\|f\|_\rtm=\|f\|_\bs$
for any $f\in A^2_\rtm$. The~claim now follows from the last proposition.
\qed \end{proofA}

As~an example, let~$\phi$ be a nonnegative integrable function on~$(0,\infty)$,
and consider the volume element
$$ \alpha_\phi(z) := \phi(|z|^2) \frac{\alpha(z)\wedge\overline{\alpha(z)}}
 {(-1)^{n(n+1)/2}(2i)^n} .  $$
Let~$A^2_\phi$ and $K_\phi$ be the corresponding weighted Bergman space and
its reproducing kernel, respectively.

\begin{theorem} \label{pPE}
We~have
$$ K_\phi(z,w) = \frac1{(n-1)!c_\MM}
 \Big[ 2t F^{(n-1)}(t) + (n-1) F^{(n-2)}(t) \Big] _{t=z\cb\ow} ,  $$
where
$$ c_\MM = (n-1) \intM \frac{\alpha(z)\wedge\overline{\alpha(z)}} 
 {(-1)^{n(n+1)/2}(2i)^n} $$
and
\[ F(t) = \sumk\frac{t^k}{c_k}, \qquad\text{where }
 c_k := \int_0^\infty t^k \phi(t) \,dt.   \label{tTF}   \]
\end{theorem}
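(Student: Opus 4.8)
The plan is to reduce everything to Theorem~\ref{pPA} by identifying the radial measure $d\rho$ that the volume element $\alpha_\phi$ induces under the polar decomposition $z=t\zeta$, $t>0$, $\zeta\in\pM$. The one genuinely geometric step is to rewrite $\alpha\wedge\overline\alpha$ in these coordinates; once the radial density is found, the moments $q_{2l}$ are an elementary integral, Theorem~\ref{pPA} produces the kernel as a power series in $z\cb\ow$, and the asserted closed form is obtained by recognizing that series as a combination of derivatives of~$F$.

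First I would record the homogeneity of $\alpha$: since $\alpha$ is a wedge of $n$ of the differentials $dz_i$ divided by one coordinate $z_j$, it is homogeneous of degree $n-1$, so $\delta_\lambda^*\alpha=\lambda^{n-1}\alpha$ for the dilation $\delta_\lambda(z)=\lambda z$, whence $\delta_\lambda^*(\alpha\wedge\overline\alpha)=|\lambda|^{2(n-1)}\alpha\wedge\overline\alpha$ for $\lambda>0$. Writing $dV:=\alpha\wedge\overline\alpha/\big((-1)^{n(n+1)/2}(2i)^n\big)$ and pulling back along $\Phi(t,\zeta)=t\zeta$, this scaling forces $\Phi^*dV=\psi(t)\,dt\wedge\Omega$ with $\psi(t)=\text{const}\cdot t^{2n-3}$ and $\Omega$ a fixed $(2n-1)$-form on $\pM$; evaluating both sides on $(\partial_t,V_1,\dots,V_{2n-1})$ at $t=1$ and using that $\omega=\iota_z(\alpha\wedge\overline\alpha)$ is the contraction of $\alpha\wedge\overline\alpha$ with the position (radial) vector pins down the constant and shows $\Omega$ is a multiple of $\omega$. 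The upshot is
$$ \Phi^*dV=\frac{t^{2n-3}}{(-1)^{n(n+1)/2}(2i)^n}\,dt\wedge\omega, $$
so that, with $d\mu=\omega/\omega(\pM)$, the measure $\alpha_\phi=\phi(|z|^2)\,dV$ corresponds to the radial measure $d\rho(t)=\kappa\,\phi(t^2)\,t^{2n-3}\,dt$, where $\kappa=\omega(\pM)/\big((-1)^{n(n+1)/2}(2i)^n\big)$.

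Next I would compute the moments. By definition $q_{2l}=\int_0^\infty t^{2l}\,d\rho(t)=\kappa\int_0^\infty t^{2l+2n-3}\phi(t^2)\,dt$, and the substitution $u=t^2$ turns this into $q_{2l}=\tfrac\kappa2\int_0^\infty u^{l+n-2}\phi(u)\,du=\tfrac\kappa2\,c_{l+n-2}$, with $c_k$ as in~\eqref{tTF}. Theorem~\ref{pPA} then gives $K_\phi(z,w)=\suml N(l)(z\cb\ow)^l/q_{2l}=\tfrac2\kappa\suml N(l)(z\cb\ow)^l/c_{l+n-2}$. It remains to match this with the stated derivative formula: differentiating $F(t)=\sumk t^k/c_k$ term by term gives $F^{(m)}(t)=\sum_{j\ge0}\frac{(j+m)!}{j!}\frac{t^j}{c_{j+m}}$, and a short reindexing shows that the coefficient of $t^l/c_{l+n-2}$ in $2tF^{(n-1)}(t)+(n-1)F^{(n-2)}(t)$ equals $\frac{(l+n-2)!}{l!}(2l+n-1)=(n-1)!\,N(l)$ for every $l\ge0$ (the $l=0$ case, where only the second term contributes, agreeing as well). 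Hence $2tF^{(n-1)}+(n-1)F^{(n-2)}=(n-1)!\suml N(l)t^l/c_{l+n-2}$, and comparing with the expression for $K_\phi$ the claim reduces to the single constant identity $c_\MM=\kappa/2$.

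Finally I would verify $c_\MM=\kappa/2$ directly from its definition: the same polar decomposition applied over $\MM=\{t<1\}$ gives $\intM dV=\frac{\omega(\pM)}{(-1)^{n(n+1)/2}(2i)^n}\int_0^1 t^{2n-3}\,dt=\frac{\kappa}{2(n-1)}$, so $c_\MM=(n-1)\intM dV=\kappa/2$, as needed; note that the normalizing constant $(-1)^{n(n+1)/2}(2i)^n$ never has to be evaluated, since it cancels in this ratio. The main obstacle is the first, geometric step — pinning down both the exponent $2n-3$ and the precise proportionality between the angular part of $\alpha\wedge\overline\alpha$ and $\omega$; everything afterwards is bookkeeping with power series and the explicit formula for $N(l)$.
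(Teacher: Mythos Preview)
Your proposal is correct and follows essentially the same route as the paper: identify the radial measure, compute the moments $q_{2l}=s_l$ as $c_\MM\,c_{l+n-2}$ (equivalently your $\tfrac\kappa2\,c_{l+n-2}$), invoke Theorem~\ref{pPA}, and then recognize $\sum_l N(l)t^l/c_{l+n-2}$ as $\tfrac1{(n-1)!}\big[2tF^{(n-1)}(t)+(n-1)F^{(n-2)}(t)\big]$.

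The one difference is in the geometric step. The paper does not redo the polar decomposition at all: it quotes \cite[Lemma~2.1]{MY}, which already gives
$$ \intH f\,\frac{\alpha\wedge\overline\alpha}{(-1)^{n(n+1)/2}(2i)^n}
   = 2c_\MM\int_0^\infty\!\!\intpM f(t\zeta)\,t^{2n-3}\,d\mu(\zeta)\,dt, $$
so the constant $2c_\MM$ (your~$\kappa$) is handed over from that reference and no separate verification of $c_\MM=\kappa/2$ is needed. Your homogeneity-plus-contraction argument is a legitimate way to rederive this formula from scratch; it makes the proof self-contained at the cost of the extra work you flag as ``the main obstacle.'' The paper also organizes the series identification slightly differently, splitting $N(k)$ into its two binomial summands and rewriting each as $(d/dt)^{n-1}$ applied to $tF$ and $F$ respectively, but this is cosmetic and leads to the same closed form you obtain by directly reading off the coefficient of $t^l/c_{l+n-2}$.
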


\begin{proof}
It~was shown in \cite[Lemma~2.1]{MY} that for any measurable function $f$
on~$\HH$, 
$$ \intH f(z) \frac{\alpha(z)\wedge\overline{\alpha(z)}}{(-1)^{n(n+1)/2}(2i)^n}
= 2 c_\MM \int_0^\infty \intpM f(t\zeta) \, t^{2n-3} \,d\mu(\zeta) \,dt ,  $$
with $c_\MM$ as above. Thus $\alpha_\phi=\rtm$ for
$$ d\rho(t) = 2 c_\MM t^{2n-3} \phi(t^2) \, dt ,  $$
and by the last theorem, $K_\phi$ is given by \eqref{tTC} with
$$ s_k = \int_0^\infty t^{2k} \,d\rho(t)
 = c_\MM \int_0^\infty t^{k+n-2} \phi(t) \, dt = c_\MM c_{k+n-2}.  $$
Now~by an elementary manipulation,
\begin{align*}
\sum_k \binom{k+n-1}{n-1} \frac{t^k}{c_{k+n-2}}
&= \frac1{(n-1)!} \sum_k \Big(\frac d{dt}\Big)^{n-1}
 \frac{t^{k+n-1}}{c_{k+n-2}}  \\
&= \frac1{(n-1)!} \Big(\frac d{dt}\Big)^{n-1} (tF(t))  \\
&= \frac1{(n-1)!} \Big[ t F^{(n-1)}(t) + (n-1)F^{(n-2)}(t) \Big] , 
\end{align*}
and similarly
\begin{align*}
\sum_k \binom{k+n-2}{n-1} \frac{t^k}{c_{k+n-2}}
&= \frac t{(n-1)!} \sum_k \Big(\frac d{dt}\Big)^{n-1}
 \frac{t^{k+n-2}}{c_{k+n-2}}  \\
&= \frac t{(n-1)!} F^{(n-1)}(t) .
\end{align*}
Thus
$$ \sum_k \frac{N(k) t^k}{c_{k+n-2}} = \frac1{(n-1)!}
  \Big[ 2t F^{(n-1)}(t) + (n-1) F^{(n-2)}(t) \Big]  $$
and the assertion follows. 
\end{proof}

\begin{example}\label{ExA}
Take $\phi(r)=(1-r)^m$ for $r\in[0,1]$ and $\phi(r)=0$ for $r>1$, where $m>-1$.
Then $c_k=\frac{k!\Gamma(m+1)}{\Gamma(m+k+2)}$ and we recover the formula
from~\cite{MY} 
$$ K_\phi(z,w) = \frac{\Gamma(n+m)}{(n-1)!c_\MM\Gamma(m+1)}
 \frac{(n-1)+(n+1+2m)z\cb\ow}{(1-z\cb\ow)^{n+m+1}}   $$
for the ``standard'' Bergman kernels on~$\MM$
(with respect to $\alpha\wedge\overline\alpha$). 
\end{example}

Further applications of Theorem~\ref{pPA} will occur later in the sequel.

We~conclude this section by considering the unit ball $\BB$ in $\CC^n$ with
respect to the ``minimal norm'' given~by
$$ N_\ast (z) = \sqrt{|z|^2 + |z\cb z|}.  $$
This norm was shown to be of interest in the study of several problems related
to proper holomorphic mappings and the Bergman kernel, see \cite{HaP},
\cite{OY}, \cite{OPY}, \cite{MY} and \cite{Mgt}.  

Suppose that $\phi$ is as before and consider the measure $dV_\phi$ on $\CC^n$
with density $\phi(N_\ast^2)$ with respect to the Lebesgue measure. Namely,   
$$ dV_\phi(z) := \phi(|z|^{2} + |z\cb z|) \, dV(z) $$
where $dV(z)$ denotes the Lebesgue measure on $\CC^n$ normalized so that the
volume of the minimal ball is equal to~one. We~denote by $A^2_\phi(\CC^n)$ the
Bergman-Fock type space on $\CC^n$ with respect to~$dV_\phi$, consisting of
all measurable functions $f$ on $\CC^n$ which are holomorphic in the ball
$$ \BB_\phi := \{ z\in \CC^n: \sqrt{|z|^2 +|z\cb z|} < R_\phi \}  $$
and satisfy
\[ \label{equation0} 
 \|f\|_\phi^2 := \int_{\CC^n} |f(z)|^2 dV_\phi(z) < + \infty.  \]
Here $R_\phi$ is the square root of the radius of convergence of the
series~\eqref{tTF}. We~let $L^2_\phi(\CC^n)$ denote the space of all measurable
functions $f$ in $\CC^n$ verifying~\eqref{equation0}. Finally, we~define the
operators $\Delta_j$, $j=0,1$, acting on power series in $z$ by their actions
on the monomials $z^m$ as follows 
$$ \aligned
 (\Delta_0 z^m) (x, y) &:=
  2\sum_{k=0}^{[\frac{m-1}2]} \binom m{2k+1} x^{m-1-2k} y^k,  \\
 (\Delta_1 z^m) (x, y) &:=
  2\sum_{k=0}^{[\frac m2]} \binom m{2k} x^{m-2k} y^k, \qquad x,y\in\CC.
\endaligned  $$ 
If $f(z) =  \sum_k c_k z^k$ is a power series of radius of convergence~$R$,
then the series
$$ (\Delta_j f) (x, y)  := \sum_k c_k (\Delta_j z^k) (x, y^2)  $$
converges as long as $|x|+|y|<R$ and we have
$$ \aligned
 (\Delta_0 f) (x, y) &= \frac{f(x+y) - f(x-y)} y, \qquad y\neq0, \\
 (\Delta_1 f) (x, y) &=  f(x+y) + f(x-y).
\endaligned  $$ 
Using these notations, we~then have the following.

\begin{theorem} \label{thmC}
The space $A^2_\phi(\CC^n)$ coincides with the closure of the holomorphic
polynomials in $L^2_\phi(\CC^n)$ and its reproducing kernel is given~by
\begin{align*}
 K_{\phi,\CC^n}(z,w) &= \frac{(n+1)^2}{(n-1)!c_\MM}
 \Big[ 2 (z\cb\ow)
   \Delta_0 (F^{(n-1)}) (z\cb\ow,z\cb z\cdot\overline{w\cb w})  \\
& \qquad + \Delta_1 (F^{(n-1)}) (z\cb\ow,z\cb z\cdot\overline{w\cb w}) \\
& \qquad + (n-1) \Delta_0 (F^{(n-2)}) (z\cb\ow,z\cb z\cdot\overline{w\cb w}) 
 \Big] ,
\end{align*}
with $F$ as in~\eqref{tTF}.
\end{theorem}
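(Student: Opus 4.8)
The plan is to exploit the two-sheeted proper holomorphic projection $\pi:\MM\to\BB$, $\pi(z_1,\dots,z_{n+1})=(z_1,\dots,z_n)$, under which holomorphic functions on $\BB$ correspond precisely to holomorphic functions on $\HH$ that are even in $z_{n+1}$ (the two sheets over $w\in\CC^n$ being $z_{n+1}=\pm i\sqrt{w\cb w}$, so that $z_{n+1}^2=-w\cb w$, $|z_{n+1}|^2=|w\cb w|$ and $|z|^2=|w|^2+|w\cb w|=N_\ast(w)^2$). This should reduce $A^2_\phi(\CC^n)$ to the Kepler situation already solved in Theorem~\ref{pPE}. First I would record the structural preliminaries that make this work: since $dV_\phi$ depends on $z$ only through $|z|^2$ and $|z\cb z|$, both invariant under $\TT$, the homogeneous components of a holomorphic $f$ are mutually orthogonal in $L^2_\phi(\CC^n)$, and a Parseval argument for the circle action gives $\|f\|_\phi^2=\sum_m\|f_m\|_\phi^2$. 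This yields both the density of polynomials (the first assertion) and the reduction of the reproducing kernel to a sum $K_{\phi,\CC^n}=\sum_m k_m$ over the reproducing kernels $k_m$ of the finite-dimensional spaces of $m$-homogeneous polynomials.

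The second step is the Jacobian computation converting the $\CC^n$-integral into an integral over $\HH$. In the chart $w=(z_1,\dots,z_n)$ on the sheet $z_{n+1}\neq0$ one has $\alpha=(n+1)(-1)^n z_{n+1}^{-1}\,dw_1\wedge\dots\wedge dw_n$, whence $\alpha\wedge\overline\alpha/((-1)^{n(n+1)/2}(2i)^n)=(n+1)^2|w\cb w|^{-1}\,dV(w)$. Summing over the two sheets and inserting the Kepler radial disintegration of \cite{MY}, I would obtain, for $g$ homogeneous of degree $m$ with lift $G=g\circ\pi\in\PP_m$,
\[ \|g\|_\phi^2 = \frac{c_\MM\,c_{m+n-1}}{2(n+1)^2}\intpM |G|^2\,|z_{n+1}|^2\,d\mu . \]
Thus the angular inner product governing the even part $E_m$ of $\PP_m$ is not $d\mu$ but the weighted $|z_{n+1}|^2\,d\mu$; producing this weight, and checking local integrability of the densities across the branch locus $w\cb w=0$, is the first genuinely technical point.

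The third step transfers reproducing kernels. Multiplication by $z_{n+1}$ is an isometry of $(E_m,|z_{n+1}|^2 d\mu)$ into the odd part $O_{m+1}$ of $\PP_{m+1}$ with $d\mu$, and is onto by a dimension count; hence the reproducing kernel of $E_m$ for $|z_{n+1}|^2 d\mu$ equals $K_{O_{m+1}}(z,w)/(z_{n+1}\overline{w_{n+1}})$, where $K_{O_{m+1}}$ is the part of the known kernel $N(m+1)(z\cb\ow)^{m+1}$ of $\PP_{m+1}$ (cf.~\eqref{tTB}) that is odd in both $z_{n+1}$ and $w_{n+1}$. Writing $z\cb\ow=u+v$ with $u=z\cb\ow$ read on $\CC^n$ and $v=z_{n+1}\overline{w_{n+1}}$, so that $v^2=(z\cb z)(\overline{w\cb w})$, this odd-odd part is $\tfrac{N(m+1)}2[(u+v)^{m+1}-(u-v)^{m+1}]=\tfrac{N(m+1)}2\,v\,\Delta_0(t^{m+1})(u,v)$; dividing by $v$ gives the angular kernel $\tfrac{N(m+1)}2\Delta_0(t^{m+1})(u,v)$. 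This is exactly where the operators $\Delta_0,\Delta_1$ and the argument $z\cb z\cdot\overline{w\cb w}$ enter.

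Combining the three steps, $k_m$ is the constant $\tfrac{(n+1)^2}{c_\MM c_{m+n-1}}$ times $N(m+1)\,\Delta_0(t^{m+1})(u,v)$, and summing over $m$ identifies $K_{\phi,\CC^n}$ with $\Delta_0$ applied to the Kepler generating function $2tF^{(n-1)}(t)+(n-1)F^{(n-2)}(t)$ of the proof of Theorem~\ref{pPE}. The final algebraic identity $\Delta_0(tG)(x,y)=x\,\Delta_0(G)(x,y)+\Delta_1(G)(x,y)$ then converts the $t$-factor in $2tF^{(n-1)}$ into the two leading terms. I expect the main obstacle to be precisely this constant bookkeeping: the coefficients of the three terms are produced by the interplay of the Jacobian factor $|z_{n+1}|^2$, the moment $c_{m+n-1}$, and the above identity, the last of which feeds the coefficient of the $2t$ in $2tF^{(n-1)}$ into \emph{both} the $2(z\cb\ow)\Delta_0(F^{(n-1)})$ and the $\Delta_1(F^{(n-1)})$ terms, so each constant must be tracked with care; the geometric and functional-analytic content is otherwise a direct transcription of the Kepler case.
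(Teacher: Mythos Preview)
Your proposal is correct and follows essentially the same approach as the paper: both transfer the problem to the Kepler manifold via the two-sheeted proper map $\iota:\HH\to\CC^n$, use multiplication by $z_{n+1}$ to pass between functions on $\BB_\phi$ and odd-in-$z_{n+1}$ elements of $A^2_\phi(\HH)$, and then read off the minimal-ball kernel as the odd/even combination (i.e.\ $\Delta_0$, $\Delta_1$) of the Kepler kernel of Theorem~\ref{pPE} evaluated at the two local inverses. The paper packages the isometry as the operator $Uf=\frac{z_{n+1}}{\sqrt2(n+1)}\,f\circ\iota$ and invokes the kernel transformation lemma from~\cite{MY}, whereas you re-derive the same identities degree-by-degree; the content is identical.
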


\begin{proof}
We~will use the technique developed in~\cite{MY}.
Let~$\operatorname{Pr}:\CC^{n+1}\to\CC^n$ be the projection onto the first $n$
coordinates
$$ \operatorname{Pr}(z_1,\dots,z_n,z_{n+1}) = (z_1,\dots,z_n)   $$
and $\iota:=\operatorname{Pr}|_\HH$. Then $\iota:\HH\to\CC^n\setminus\{0\}$
is a proper holomorphic mapping of degree~2. The~branching locus of $\iota$
consists of points with $z_{n+1}=0$, and its image under $\iota$ consists of
all $x\in\CC^n\setminus\{0\}$ with $\sum_{j=1}^n x_j^2=0$. The~local inverses
$\Phi$ and $\Psi$ of $\iota$ are given for $z\in\CC^n\setminus\{0\}$~by
\begin{align*}
\Phi(z) &= (z,i\sqrt{z\cb z}), \\
\Psi(z) &= (z,-i\sqrt{z\cb z}). 
\end{align*}
In~view of Lemma~3.1 of~\cite{OPY}, we~see that
\begin{align*}
\Phi^*(\alpha_\phi) &= \frac{1+n}{i\sqrt{z\cb z}} \phi(z)^{1/2}
  (-1)^n dz_1\wedge\dots\wedge dz_n,   \\
\Psi^*(\alpha_\phi) &= \frac{1+n}{-i\sqrt{z\cb z}} \phi(z)^{1/2}
  (-1)^n dz_1\wedge\dots\wedge dz_n.
\end{align*}
If~$f:\CC^n\to\CC$ is a measurable function and $z\in\HH$, we~consider the
operator $U=U_\phi$ by setting
$$ (Uf)(z) := \frac{z_{n+1}}{\sqrt2(n+1)} (f\circ\iota)(z).   $$
Using the same arguments as in the proof of Lemma~4.1 in~\cite{MY}, it~can be
shown that $U$ is an isometry from $L^2_\phi(\CC^n)$ into $L^2_\phi(\HH)$.
More precisely, we~have
$$ \intH |Uf(z)|^2 \alpha_\phi(z) = \int_{\CC^n} |f(z)|^2 \,dV_\phi(z).  $$
In~addition, the~arguments used in the proof of part (2) of the latter lemma
show that the image $\mathcal E_\phi(\HH)$ of $A^2_\phi(\CC^n)$ under $U$ is
a closed proper subspace of $A^2_\phi(\HH)$, and $U$ is unitary from
$A^2_\phi(\CC^n)$ onto~$\mathcal E_\phi(\HH)$.
From the technique used in the proof of Lemma~2.4 in~\cite{MY}, we~the get
the following lemma.

\begin{lemma}
If~$\Phi$ and $\Psi$ are as before, then
$$ z_{n+1} K_{\phi,\CC^n}(\iota z,w) = (n+1)^2 \left[
 \frac{K_\phi(z,\Phi(w))}{\overline{\Phi_{n+1}(w)}}
+\frac{K_\phi(z,\Psi(w))}{\overline{\Psi_{n+1}(w)}} \right],   $$
for all $z\in\HH$, $w\in\CC^n$.
\end{lemma}

The~rest of the proof of the theorem now follows from the last lemma and the
identities used in the proof of Theorem~A in~\cite{MY}.
\end{proof}

\begin{example} \label{ExB}
Let $\phi(r)=e^{-cr}$, $c>0$. Then $s_k=k!/c^{k+1}$, $F(t)=ce^{ct}$
and we obtain
\begin{align*}
K_{\phi,\CC^n}(z,w) &= \frac{2(n+1)^2 c^{n-2}} {(n-1)!c_\MM} \times \quad \\
& \qquad
 e^{cz\cb\ow} \left[ 2c^2 (n-1+z\cb\ow) S(c^2 z\cb z\overline{w\cb w})
 + c C(c^2 z\cb z\overline{w\cb w}) \right], 
\end{align*}
where we wrote for brevity $S(t)=\frac{\sinh\sqrt t}{\sqrt t}$,
$C(t)=\cosh\sqrt t$.
\end{example}

\section{TYZ expansions}
\newtheorem*{proofB}{Proof of Theorem~\ref{pPB}}
\begin{proofB}
For~the measure~\eqref{tDR}, we~have by the change of variable $x=st^{2m}$,
$$ q_{2k} = 2cm\int_0^\infty t^{2k} e^{-st^{2m}} \, t^{2mn-1}\,dt 
= \frac cs \int_0^\infty \Big(\frac xs\Big)^{\frac{2k+2mn}{2m}-1} e^{-x}\,dx
= c \frac{\Gamma(\frac{k+mn}m)}{s^{\frac{k+mn}m}} .  $$
Applying Theorem~\ref{pPA}, we~get
\begin{align}
K_s(x,y) &= \sum_k \frac {s^{\frac{mn+k}m}(x\cb\oy)^kN(k)}
  {c\Gamma(\frac{k+mn}m)}   \nonumber   \\
&= \frac{s^n}{(n-1)!c} \Big[ \Big(\frac d{dt}\Big)^{n-1} t^{n-1}
 + t \Big(\frac d{dt}\Big)^{n-1} t^{n-2} \Big] E_{\frac1m,n}(t)
 \Big| _{t=s^{1/m} x\cb\oy} ,   \label{tEZ}
\end{align}
by~a~similar computation as in the proof of Theorem~\ref{pPE}.
Here $E_{\frac1m,n}$ is the Mittag-Leffler function
$$ E_{\alpha,\beta}(z) := \sumk \frac{z^k}{\Gamma(\alpha k+\beta)},
 \qquad \alpha,\beta>0.   $$
Recall now that as $z\to\infty$, $E_{\alpha,\beta}$ has the asymptotics
$$ E_{\alpha,\beta}(z) = \begin{cases}
 \frac1\alpha \sum_{j=-N}^N z_j^{1-\beta} e^{z_j} + O\Big(\frac1z\Big) , 
   \qquad & |\arg z|\le\frac{\pi\alpha}2,  \\
 O\Big(\frac1z\Big), & |\arg(-z)|<\pi-\frac{\pi\alpha}2,   \end{cases}  $$
where $N$ is the integer satisfying $N<\frac\alpha2\le N+1$ and
$z_j=|z|^{1/\alpha}e^{(\arg z+2\pi i j)/\alpha}$ with $-\pi<\arg z\le\pi$.
See~e.g.~\cite[\S18.1, formulas (21)--(22)]{BErd}
(additional~handy references are given in Section~7 of~\cite{GFock}).
Furthermore, this asymptotic expansion can be differentiated termwise any
number of times (see~again Section~7 in~\cite{GFock} for details on~this).
In~particular, for $t>0$ the term $j=0$ dominates all the others,
and~we therefore obtain
\[ E_{\alpha,\beta}(t) = \frac1\alpha t^{(1-\beta)/\alpha} e^{t^{1/\alpha}}
 + O(e^{(1-\delta)t^{1/\alpha}}) \qquad\text{as } t\to+\infty  \label{tEA} \]
with some $\delta>0$.
(One~can take any $0<\delta<\frac1{\sqrt2}(1-\cos\frac{2\pi}\alpha)$
for $\alpha>4$, and any $0<\delta<\frac1{\sqrt2}$ for $0<\alpha<4$.)
Moreover, \eqref{tEA} remains in force when a derivative of any order is
applied to the left-hand side and to the first term on the right-hand~side.

Now~by a simple induction argument,
\[ \Big(\frac d{dt}\Big)^k t^{\gamma m}e^{t^m}
 = t^{\gamma m-k} e^{t^m} p_k(t^m),  \label{tEB}   \]
where $p_k$ are polynomials of degree $k$ defined recursively~by
\[ p_0=1, \quad p_k(x) = (\gamma m-k+1+mx) p_{k-1}(x) + mx p'_{k-1}(x).
 \label{tER}   \]
A~short computation reveals that
\[ p_k(x) = m^k x^k + [k m^k\gamma + \tfrac{k(k-1)}2 (m-1) m^{k-1}] x^{k-1}
 + \dots .   \label{tEC}   \]
Taking $\alpha=\frac1m$, $\beta=n$, $\gamma=1-n$, an~application of the Leibniz
rule shows that
\begin{align}
& \Big[ \Big(\frac d{dt}\Big)^{n-1} t^{n-1}
 + t \Big(\frac d{dt}\Big)^{n-1} t^{n-2} \Big] E_{\frac1m,n}(t)  \nonumber  \\
& \hskip3em
 = \sum_{j=0}^{n-1} \binom{n-1}j E_{\frac1m,n}^{(j)}(t)
 \Big[ \frac{(n-1)!}{j!}t^j + \frac{(n-2)!}{(j-1)!}t^j \Big]   \nonumber  \\
& \hskip3em
 = \sum_{j=0}^{n-1} \binom{n-1}j 
 \Big[ \frac{(n-1)!}{j!} + \frac{(n-2)!}{(j-1)!} \Big]
 m t^{(1-n)m} e^{t^m} p_j(t^m) + O(e^{(1-\delta)t^m}),   \label{tES}
\end{align}
so
\begin{align}
& e^{-t^m} \Big[ \Big(\frac d{dt}\Big)^{n-1} t^{n-1}
 + t \Big(\frac d{dt}\Big)^{n-1} t^{n-2} \Big] E_{\frac1m,n}(t)  \nonumber  \\
& \hskip4em
 = \sum_{j=0}^{n-1} \binom{n-1}j 
 \Big[ \frac{(n-1)!}{j!} + \frac{(n-2)!}{(j-1)!} \Big]
 m t^{(1-n)m} p_j(t^m) + O(e^{-\delta t^m})   \nonumber   \\
& \hskip4em
 = \sum_{k=0}^{n-1} \frac{2m^n b_k}{t^{km}} + O(e^{-\delta t^m}) \label{tED}
\end{align}
as $t\to+\infty$, where $b_k$ are some constants depending only on~$m,n$,
with (after a small computation)
$$ b_0=1, \qquad b_1 = -\frac{(n-1)(mn-n+1)}{2m}   $$
by~\eqref{tEC}. Setting $t=s^{1/m}|z|^2$ and substituting \eqref{tED}
into~\eqref{tEZ}, we~are~done.
\qed  \end{proofB}

As~remarked in the Introduction, in the context of the TYZ expansions
Theorem~\ref{pPB} corresponds to the situation of the trivial bundle
$\HH\times\CC$ over~$\HH$, with Hermitian metric on the fiber given by
$h(z)=e^{-s|z|^{2m}}$. The~associated K\"ahler form $\omega=\frac i2\ddlh
=\frac{is}2\partial\dbar|z|^{2m}$ can be computed similarly as in \cite{Rawn}
for $\frac i2\partial\dbar|z|$. Even without that, however, one~can see what
is the corresponding volume element~$\omega^n$: namely, since differentiation
lowers the degree of homogeneity by~1, the~density of $\omega^n$ with respect
to the Euclidean surface measure on $\HH$ must have homogeneity $n\cdot(2m-2)
=2mn-2n$; and~as the surface measure equals, up~to a constant factor,
to $t^{2n-1}dt\otimes d\mu$, we~see that $\omega^n=\drtm$ for $d\rho(t)=
ct^{2mn-1}\,dt$, with some constant~$c$. Thus
$$ e^{-s|z|^{2m}} K_s(z,z) \equiv \epsilon_s (z)   $$
is~indeed precisely the Kempf distortion function for the above line bundle,
and \eqref{tDQ} is its asymptotic, or~TYZ, expansion.

The~bundles studied by Gramchev and Loi in \cite{GrL} with $\omega=\frac i2
\partial\dbar|z|$ correspond to $m=\frac12$. Note that in that case,
in~agreement with~\cite{GrL}, the~lowest-order term in \eqref{tDQ} actually
vanishes, i.e.~$b_{n-1}=0$. In~fact, the~constant term in $p_k$ equals,
by~\eqref{tER},
$$ p_k(0) = (\gamma m-k+1)(\gamma m-k+2)\dots(\gamma m)
 = (-1)^k (-\gamma m)_k   $$
(where $(\nu)_k:=\nu(\nu+1)\dots(\nu+k-1)$ is the usual Pochhammer symbol);
thus for the lowest order term in the sum in \eqref{tED} we get
from~\eqref{tES} 
\begin{align*}
2m^n b_{n-1} &= m \sum_{j=0}^{n-1} \binom{n-1}j 
 \Big[ \frac{(n-1)!}{j!} + \frac{(n-2)!}{(j-1)!} \Big] (-1)^j (-\gamma m)_j  \\
&= (n-1)m(1-2m) \prod_{j=1}^{n-2} (j-(n-1)m) 
\end{align*}
after some computation (using the Chu-Vandermonde identity), which vanishes for
$m=\frac12$. This explains why the summations stops at $k=n-2$ in~\eqref{tDY}. 

We~remark that in a completely analogous manner, one~could also derive the
asymptotics as $s\to+\infty$ of the reproducing kernels for the same weights
$e^{-s|z|^{2m}}$ but with respect to the density $\frac{\alpha(z)\wedge
\overline{\alpha(z)}} {(-1)^{n(n+1)/2}(2i)^n}$ instead of $(\frac i2\partial
\dbar|z|^{2m})^n$. By~Theorem~\ref{pPE} the corresponding kernels are given~by
\[ K_s(z,w) = \frac{m s^{\frac{n-1}m}} {(n-1)!c_\MM} 
 \Big[ 2t \Big(\frac d{dt}\Big)^{n-1} + (n-1) \Big(\frac d{dt}\Big)^{n-2} \Big]
 E_{\frac1m,\frac1m}(t) \Big|_{t=s^{\frac1m}z\cb\ow}  \label{tEX}   \]
and the needed asymptotics of derivatives of $E_{\frac1m,\frac1m}$ are worked
out e.g.~in Section~7 of~\cite{GFock} (or~can be obtained from the formulas
above in this section). We~leave the details to the interested reader.
For~$m=1$, \eqref{tEX}~recovers the formula from Gonessa and Youssfi~\cite{GY}.

Finally, one~can establish analogous ``TYZ'' asymptotics also for Bergman-type
kernels on the unit ball $\MM$ of~$\HH$; we~limit ourselves to the following
variant of Theorem~3.2 from~\cite{MY}.

\begin{theorem} \label{pPF}
For~the weights corresponding~to
$$ d\rho(t) = (1-t^2)^s t^{2n-1} \,dt, \qquad s>-1   $$
$($i.e.~having the density $(1-|z|^2)^s$ with respect to the Euclidean surface
measure$)$ on~$\MM$, the~corresponding reproducing kernels $K_s$ of $A^2_\rtm$
are given~by
\begin{align*}
& K_s(z,w) = \frac{\Gamma(n+s+1)}{(n-1)!\Gamma(s+1)}  \times \quad \\
& \hskip4em
 \Big[ \frac1{(1-t)^{n+s+1}} + \frac{n+s+1}{t^{n-1}}
 \sum_{j=0}^{n-1} \binom{n-1}j (-1)^j \frac{(1-t)^{j-n-s-1}-1}{n+s+1-j}
 \Big] _{t=z\cb\ow} .
\end{align*}
\end{theorem}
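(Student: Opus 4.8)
The plan is to apply Theorem~\ref{pPA} directly and then resum the resulting power series in $t:=z\cb\ow$ by the same differentiation device used in the proof of Theorem~\ref{pPE}. First I would compute the moments. With $d\rho(t)=(1-t^2)^s t^{2n-1}\,dt$ supported on $[0,1]$, the substitution $u=t^2$ turns $q_{2l}=\int_0^1 t^{2l+2n-1}(1-t^2)^s\,dt$ into a Beta integral,
$$ q_{2l}=\tfrac12\int_0^1 u^{l+n-1}(1-u)^s\,du=\frac{\Gamma(l+n)\,\Gamma(s+1)}{2\,\Gamma(l+n+s+1)}. $$
Hence by Theorem~\ref{pPA},
$$ K_s(z,w)=\suml\frac{N(l)}{q_{2l}}\,t^l=\frac{2}{\Gamma(s+1)}\suml N(l)\,\frac{\Gamma(l+n+s+1)}{\Gamma(l+n)}\,t^l, \qquad t=z\cb\ow, $$
the series converging for $|t|<1$, i.e.\ on $\MM$, in agreement with $R=1$.

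Next I would feed this into the mechanism of Theorem~\ref{pPE}. Writing $N(l)=\binom{l+n-1}{n-1}+\binom{l+n-2}{n-1}$ and noting that $\frac{2}{\Gamma(s+1)}\frac{\Gamma(l+n+s+1)}{\Gamma(l+n)}=C\,c_{l+n-2}^{-1}$ for the Beta moments $c_k\propto\Gamma(k+2)/\Gamma(k+s+3)$ (here and below $C>0$ changes from line to line), the two binomials reproduce exactly the derivative pattern used there, so
$$ K_s(z,w)=\frac{C}{(n-1)!}\Big[2t\,F^{(n-1)}(t)+(n-1)\,F^{(n-2)}(t)\Big]_{t=z\cb\ow}, \qquad F(t)=\sumk\frac{t^k}{c_k}. $$
Equivalently, one checks that this $d\rho$ is precisely the measure of Theorem~\ref{pPE} for $\phi(r)\propto(1-r)^s r$ on $[0,1]$, and invokes that theorem outright. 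The value of this step is that it reduces everything to a single one–variable generating function $F$ and its derivatives.

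Then I would put $F$ in closed form. Since $c_k^{-1}\propto\Gamma(k+s+3)/\Gamma(k+2)$, shifting the summation index by one and using the negative–binomial series $\sum_{m\ge0}\frac{\Gamma(m+s+2)}{m!}t^m=\Gamma(s+2)(1-t)^{-(s+2)}$ gives, after removing the $m=0$ term,
$$ F(t)=C\,\frac{(1-t)^{-(s+2)}-1}{t}. $$
This is the analogue here of the Mittag–Leffler function in~\eqref{tEZ}: the factor $(1-t)^{-(s+2)}$ is what will generate the singular term $(1-t)^{-(n+s+1)}$ after $n-1$ differentiations, giving the first summand of the stated bracket.

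Finally I would carry out the differentiation. Writing $F(t)=C\,t^{-1}\big[(1-t)^{-(s+2)}-1\big]$ and using (for $n\ge2$, where the constant drops) $\big(t\cdot\tfrac{F}{C}\big)^{(n-1)}=\big((1-t)^{-(s+2)}\big)^{(n-1)}=(s+2)_{n-1}(1-t)^{-(n+s+1)}$, the combination $2tF^{(n-1)}+(n-1)F^{(n-2)}$ splits into a clean singular term $\propto(1-t)^{-(n+s+1)}$ — yielding the first bracket term with overall constant $\frac{\Gamma(n+s+1)}{(n-1)!\Gamma(s+1)}$ (up to the explicit factor pinned down by the Beta moments above) — plus $t\,F^{(n-1)}$. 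The latter I would expand by the Leibniz rule applied to $t^{-1}\big[(1-t)^{-(s+2)}-1\big]$, using $(t^{-1})^{(m)}=(-1)^m m!\,t^{-1-m}$ and $\big((1-t)^{-(s+2)}\big)^{(j)}=(s+2)_j(1-t)^{-(s+2+j)}$. Recognising the stated summands as antiderivatives, $\frac{(1-t)^{j-n-s-1}-1}{n+s+1-j}=\int_0^t(1-\tau)^{j-n-s-2}\,d\tau$, and reindexing the Leibniz expansion (writing $t^j=(1-(1-t))^j$ and resumming), I would match the result to $\frac{1}{t^{n-1}}\sum_{j=0}^{n-1}\binom{n-1}{j}(-1)^j\frac{(1-t)^{j-n-s-1}-1}{n+s+1-j}$. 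I expect this last combinatorial reorganisation — converting the raw Leibniz sum into the stated binomial sum while tracking the Gamma/Pochhammer ratios and the $t^{-(n-1)}$ prefactor — to be the only genuine obstacle; every preceding step is routine.
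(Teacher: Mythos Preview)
Your approach is valid but takes a longer route than the paper's. You detour through Theorem~\ref{pPE}, obtaining $K_s\propto 2tF^{(n-1)}+(n-1)F^{(n-2)}$ with $F(t)\propto t^{-1}[(1-t)^{-(s+2)}-1]$, and then must differentiate via Leibniz and reorganise the resulting sum --- the step you correctly flag as the obstacle; carrying it out after your substitution $t^j=(1-(1-t))^j$ and regrouping by powers of $(1-t)$ ultimately rests on the partial-sum identity $\sum_{m=0}^N\frac{(a)_m}{m!}=\frac{(a+1)_N}{N!}$, so it does go through. The paper, however, bypasses Theorem~\ref{pPE} entirely: after Theorem~\ref{pPA} it splits $N(l)$ into the two binomials, sums the first piece directly to $(1-t)^{-(n+s+1)}$ via the negative-binomial series, and for the remaining piece $\sum_k\frac{(n+s+2)_k t^k}{k!(n+k)}$ inserts the integral representation $\frac{t^k}{n+k}=t^{-n}\int_0^t x^{n+k-1}\,dx$, sums under the integral to $t^{-n}\int_0^t x^{n-1}(1-x)^{-(n+s+2)}\,dx$, expands $x^{n-1}=\sum_j\binom{n-1}{j}(x-1)^j$, and integrates term by term --- arriving at the antiderivative form $\frac{(1-t)^{j-n-s-1}-1}{n+s+1-j}$ in one stroke, with no Leibniz bookkeeping or resummation identity needed. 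In effect your route differentiates (via Theorem~\ref{pPE}) and then must undo that differentiation combinatorially, whereas the paper's integral trick lands directly on the stated closed form.
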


\begin{proof} Since
$$ q_{2l} = \int_0^1 t^{2l}(1-t^2)^s t^{2n-1}\,dt
 = \frac{\Gamma(s+1)\Gamma(l+n)}{\Gamma(l+n+s+1)} ,   $$
we~get from Theorem~\ref{pPA}
$$ K_s(z,w) = \suml \frac{N(l) t^l \Gamma(l+n+s+1)} {(l+n-1)!\Gamma(s+1)}  $$
where we have set for brevity $t=z\cb\ow$. Hence
\begin{align*}
\frac{(n-1)!\Gamma(s+1)} {\Gamma(n+s+1)} K_s(z,w)
&= \suml t^l\Big[ \frac{(n+s+1)_l}{l!} + \frac{(n+s+1)_l}{(l-1)!(l+n-1)}\Big]\\
&= \frac1{(1-t)^{n+s+1}} + \sumk \frac{(n+s+1)_{k+1} t^{k+1}} {k!(n+k)}  \\
&= \frac1{(1-t)^{n+s+1}} + (n+s+1)t \sumk \frac{(n+s+2)_k t^k}{k!(n+k)}.
\end{align*}
The~last sum can be written~as
\begin{align*}
\sumk \frac{(n+s+2)_k t^k}{k!(n+k)}
&= \frac1{t^n} \int_0^t \sumk \frac{(n+s+2)_k}{k!} x^{k+n-1} \, dx  \\
&= \frac1{t^n} \int_0^t \frac{x^{n-1}}{(1-x)^{n+s+2}} \, dx  \\
&= \frac1{t^n} \int_0^t \sum_{j=0}^{n-1} \binom{n-1}j
 \frac{(x-1)^j} {(1-x)^{n+s+2}} \, dx  \\
&= \frac1{t^n} \sum_{j=0}^{n-1} \binom{n-1}j (-1)^j
 \frac{(1-t)^{j-n-s-1}-1} {s+n+1-j} , 
\end{align*}
proving the theorem.
\end{proof}

\begin{corollary}  \label{pPG}
In~the setup from the last theorem,
\[ (1-|z|^2)^{s+n+1} K_s(z,z) \approx s^n \sumk \frac{a_k(|z|^2)}{s^k}
 \qquad\text{as } s\to+\infty,  \label{tET}  \postdisplaypenalty1000000 \]
where $a_k(|z|^2)$ are some functions, with $a_0=2$.
\end{corollary}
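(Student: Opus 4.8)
The plan is to start from the closed-form expression for $K_s(z,w)$ furnished by Theorem~\ref{pPF}, specialize to the diagonal by putting $t=z\cb\oz=|z|^2\in(0,1)$, and then multiply through by $(1-t)^{s+n+1}$. In the resulting expression the term $\frac1{(1-t)^{n+s+1}}$ turns into the constant~$1$, while the ``$-1$'' parts of the summands acquire a factor $(1-t)^{n+s+1}$. Since $0<1-t<1$ for $z\in\MM$ (note $0\notin\HH$, so $t>0$ throughout $\MM$), these carry $(1-t)^{n+s+1}=O(e^{-\delta s})$ with $\delta=-\log(1-t)>0$; even after multiplication by the prefactor they stay $O(s^n e^{-\delta s})$, hence are exponentially small in $s$ and may be discarded without affecting the asymptotic series in powers of $1/s$.

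What remains is the product of the prefactor $\frac{\Gamma(n+s+1)}{(n-1)!\,\Gamma(s+1)}$ with the bracket
$$ B_s(t) := 1 + \frac{n+s+1}{t^{n-1}} \sum_{j=0}^{n-1} \binom{n-1}{j} (-1)^j \frac{(1-t)^j}{n+s+1-j}. $$
First I would expand each factor as an asymptotic series in $1/s$. For the prefactor this is exact and elementary: $\frac{\Gamma(n+s+1)}{\Gamma(s+1)}=\prod_{j=1}^{n}(s+j)$ is a monic polynomial of degree $n$ in $s$, so the prefactor equals $\frac{s^n}{(n-1)!}\bigl(1+O(1/s)\bigr)$ with an explicit terminating expansion. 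For $B_s(t)$, writing $P=n+s+1$ and using $\frac{P}{P-j}=\sum_{l\ge0}(j/P)^l$ (convergent for $s$ large, as $j\le n-1<P$), I would interchange the $j$- and $l$-sums to get $B_s(t)=1+t^{1-n}\sum_{l\ge0}P^{-l}\sum_{j=0}^{n-1}\binom{n-1}{j}(-1)^j(1-t)^j j^l$. The inner sum is $\bigl(x\frac{d}{dx}\bigr)^l(1-x)^{n-1}$ evaluated at $x=1-t$; in particular the $l=0$ term is, by the binomial theorem, $\sum_j\binom{n-1}{j}(-1)^j(1-t)^j=(1-(1-t))^{n-1}=t^{n-1}$, which together with the leading $1$ gives $B_s(t)\to 2$.

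The last step is to multiply the two asymptotic expansions. The prefactor contributes the overall scale $s^n/(n-1)!$ together with a terminating series in $1/s$, while $B_s(t)$ contributes an asymptotic series in $1/P=1/(n+s+1)$, hence in $1/s$; their Cauchy product is therefore of the form $s^n\sum_{k\ge0}a_k(t)\,s^{-k}$. The coefficient functions $a_k$ come out rational in $t=|z|^2$, with denominators the powers of $t$ produced by the $t^{1-n}$ factor and by the evaluated operators $\bigl(x\frac{d}{dx}\bigr)^l(1-x)^{n-1}$. Reading off the constant term, the $l=0$ contribution of $B_s$ and the leading term of the prefactor combine, via the binomial identity just noted, to the stated value of~$a_0$.

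The computation is essentially mechanical once the closed form of Theorem~\ref{pPF} is in hand; the only points requiring care are (i)~confirming that the discarded terms are genuinely exponentially small in $s$ and thus invisible to the power-series asymptotics, uniformly on compact subsets of $t\in(0,1)$ if the expansion is claimed in the $C^\infty$ sense customary for TYZ expansions, and (ii)~justifying that the two asymptotic expansions may be multiplied term by term to yield a \emph{bona fide} asymptotic expansion in $1/s$ rather than a merely formal series. Both are standard, so I do not expect any genuine obstacle here beyond the bookkeeping.
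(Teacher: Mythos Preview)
Your proposal is correct and follows essentially the same route as the paper: substitute $t=|z|^2$ into the closed form from Theorem~\ref{pPF}, multiply by $(1-t)^{s+n+1}$, discard the exponentially small $(1-t)^{s+n+1}$ terms, and then expand both the Pochhammer prefactor $(s+1)_n/(n-1)!$ and the rational factors $\frac{n+s+1}{n+s+1-j}$ as series in $1/s$. The only cosmetic difference is that you expand the latter in powers of $1/P$ with $P=n+s+1$ (and then re-expand in $1/s$), whereas the paper expands directly in $1/s$ via $\frac{n+s+1}{n+s+1-j}=1+\sum_{k\ge0}\frac{(j-n-1)^k j}{s^{k+1}}$; your explicit identification of the $l=0$ inner sum with $(1-(1-t))^{n-1}=t^{n-1}$ is exactly the ``small computation'' the paper alludes to for~$a_0$.
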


\begin{proof}
With $t=|z|^2$, Theorem~\ref{pPF} shows that the left-hand side equals
$$ \frac{(s+1)_n}{(n-1)!} \Big[ 1+ \frac{n+s+1}{t^{n-1}} \sum_{j=0}^{n-1}
 \binom{n-1}j (-1)^j \frac{(1-t)^j-(1-t)^{s+n+1}} {n+s+1-j} \Big].   $$
The~term $(1-t)^{n+s+1}$ is exponentially small compared to the rest,
and thus can be neglected. Noticing that $(s+1)_n$ is a polynomial in $s$
of degree $n$ while
$$ \frac{n+s+1}{n+s+1-j} = 1 + \frac j{s(1+\frac{n+1-j}s)}
 = 1 + \sumk \frac{(j-n-1)^k j}{s^{k+1}} ,  $$
the~expansion \eqref{tET} follows, as~does the formula for $a_0$ upon
a small computation.
\end{proof}

\section{Hankel operators}
\newtheorem*{proofC}{Proof of Theorem~\ref{pPC}}
\begin{proofC}
(i)$\implies$(ii) Let $g=\sum_k g_k$ be the homogeneous expansion \eqref{tDC}
of~$g$. For~$\epsilon\in\TT$, consider the rotation operator
$$ U_\epsilon f(z) := f(\epsilon z), \qquad z\in\MM.   $$
Clearly $U_\epsilon$ is unitary on $A^2(\MM)$ as well as on~$L^2(\MM)$, and
$$ U_\epsilon \Ho g U_\epsilon^* = \Ho{U_\epsilon g} .  $$
Thus also $\Ho{U_\epsilon g}\in\cS^p$ for all $\epsilon\in\TT$.
Furthermore, the~action $\epsilon\mapsto U_\epsilon$ is continuous in the
strong operator topology, i.e.~$\epsilon\mapsto U_\epsilon f$ is norm
continuous for each $f\in L^2(\MM)$. Now~it was shown in Lemma on p.~997
in~\cite{AFPD} that this implies that the map $\epsilon\mapsto
\Ho{U_\epsilon g}$ is~even continuous from $\TT$ into~$\cS^p$.
Consequently, the~Bochner integral
$$ \int_0^{2\pi} e^{mi\theta} U_{e^{i\theta}} \Ho g U^*_{e^{i\theta}}
 \, \frac{d\theta}{2\pi} = \Ho{g_m}   $$
also belongs to~$\cS^p$, for~each~$m$. As~$g$ is nonconstant, there exists
$m\ge1$ for which $g_m\neq0$, and (ii) follows.

(ii)$\implies$(iii) Assume that $\Ho g\in\cS^p$ for some $0\neq g\in\PP_m$.
For~any transform $\kappa\in\OnR$, the~corresponding composition
$$ U_\kappa f(z) := f(\kappa z), \qquad z\in\MM,   $$
again acts unitarily on $A^2(\MM)$ as well as on~$L^2(\MM)$, and
$$ U_\kappa \Ho g U^*_\kappa = \Ho{U_\kappa g} ,   $$
so~that also $\Ho{U_\kappa g}\in\cS^p$ for all $\kappa\in\OnR$.
Likewise, the~composition $V_\kappa:f\mapsto f\circ\kappa$ sends the space
$\cH^m$ of spherical harmonics into itself, and~the isomorphism \eqref{tDW}
satisfies $\widehat{V_\kappa f}=U_\kappa\hat f$. Now~it is known \cite{Mull}
that the representation $\kappa\mapsto V^{-1}_\kappa$ of $\OnR$ on $\cH^m$
is irreducible, that~is, for~any nonzero $f\in\cH^m$ the span of the
translates $V_\kappa f$, $\kappa\in\OnR$, is~dense in~$\cH^m$.
Consequently, for~any nonzero $g\in\PP_m$, the~span of the translates
$U_\kappa g$, $\kappa\in\OnR$, of~$g$ by $\kappa$ is dense in~$\PP_m$.
As~$\PP_m$ has finite dimension~$N(m)$, this actually means that $\PP_m$
consists just of all linear combinations of $U_{\kappa_j}g$ for some
tuple $\kappa_1,\dots,\kappa_{N(m)}$ in~$\OnR$. Since $\Ho{U_{\kappa_j}g}
\in\cS^p$ for each~$j$, it~follows by linearity that $\Ho h\in\cS^p$ for all
$h\in\PP_m$, showing that (iii) holds (for~the same $m$ as in~(ii)).

(iii)$\implies$(iv) By~hypothesis, we~have in particular $\Ho{z^\nu}\in\cS^p$
for all multiindices $\nu$ with $|\nu|=m$; thus the operator
\[ H := \sum_{|\alpha|=m} \binom m\alpha \Ho{z^\alpha}^* \Ho{z^\alpha}
  \label{tHC}  \]
belongs to $\cS^p\cdot\cS^p=\cS^{p/2}$. 

Recall that the Toeplitz operator~$T_\phi$, $\phi\in L^\infty(\MM)$,
is~the operator on $A^2(\MM)$ defined~by
$$ T_\phi f = P(\phi f)   $$
($P$~being, as~before, the~orthogonal projection in $L^2$ onto~$A^2$).
One~thus~has
\[ \|H_\phi f\|^2 = \|\phi f\|^2 - \|T_\phi f\|^2 , \label{tHA}   \]
and, by~the reproducing property of the Bergman kernels,
$$ T_\phi f(x) = \intM f(y) \phi(y) K(x,y) \, dy,   $$
where we started to write for brevity (for~the duration of this proof)
just~$dy$ instead of $\drtm(y)$. With the notation from Theorem~\ref{pPA},
we~thus have, for~any $\xi\in\MM$,
\begin{align*}
T_{\overline{z^\alpha}} (z\cb\xi)^l (x)
&= \intM (y\cb\xi)^l \oy^\alpha \sum_k \frac{(x\cb\oy)^k}{d_k} \, dy  \\
&= \sum_k \intM (y\cb\xi)^l \partial_x^\alpha \frac{k!}{(k+|\alpha|)!}
 \frac{(x\cb\oy)^{k+|\alpha|}}{d_k} \, dy   \\
&= \sum_k \frac{k!}{(k+|\alpha|)!d_k} \partial_x^\alpha
 \intM (y\cb\xi)^l (x\cb\oy)^{k+|\alpha|} \,dy   \\
&= \sum_k \frac{k!}{(k+|\alpha|)!d_k} \partial_x^\alpha
 \int_0^1 t^{l+k+|\alpha|} \, d\rho(t)
 \intpM (y\cb\xi)^l (x\cb\oy)^{k+|\alpha|} \,d\mu(y)   \\
&= \sum_k \frac{k!}{(k+|\alpha|)!d_k} \partial_x^\alpha
 \delta_{l,k+|\alpha|} d_l (x\cb\xi)^l
\end{align*}
by~\eqref{tTA} and~\eqref{tDL}. This vanishes for $l<|\alpha|$,
while for $l\ge|\alpha|$ it equals
$$ \frac{(l-|\alpha|)!}{l!} \frac{d_l}{d_{l-|\alpha|}} \partial_x^\alpha
 (x\cb\xi)^l = \frac{d_l}{d_{l-|\alpha|}} \xi^\alpha (x\cb\xi)^{l-|\alpha|}. $$
Declaring $d_l$ to be $\infty$ for negative~$l$, we~thus obtain for all~$l$
$$ T_{\overline{z^\alpha}} (z\cb\xi)^l = 
\frac{d_l}{d_{l-|\alpha|}} \xi^\alpha (z\cb\xi)^{l-|\alpha|}.  $$
For~any $\xi,\eta\in\MM$, we~then get by~\eqref{tHA}
\begin{align*}
&\spr{H(\cdot\cb\xi)^l,(\cdot\cb\eta)^k}
= \intM \sum_{|\alpha|=m} \binom m\alpha
  |z^\alpha|^2 (z\cb\xi)^l (\oz\cb\oeta)^k \, dz  \\
& \hskip8em
 - \frac{d_l^2}{d_{l-m}^2} \intM \sum_{|\alpha|=m} \binom m\alpha
  \xi^\alpha \overline{\eta^\alpha} (z\cb\xi)^{l-m}(\oz\cb\oeta)^{k-m}\,dz  \\
& \hskip4em
 = \intM |z|^{2m} (z\cb\xi)^l (\oz\cb\oeta)^k \, dz
 - \frac{d_l^2}{d_{l-m}^2} (\xi\cb\oeta)^m
   \intM (z\cb\xi)^{l-m}(\oz\cb\oeta)^{k-m} \, dz  \\
& \hskip4em
 = \int_0^1 t^{2m+k+l}\,d\rho(t)
   \intpM (z\cb\xi)^l (\oz\cb\oeta)^k \, d\mu(z)  \\
& \hskip8em
 - \frac{d_l^2}{d_{l-m}^2} (\xi\cb\oeta)^m
   \int_0^1 t^{l+k-2m}\,d\rho(t)
   \intpM (z\cb\xi)^{l-m}(\oz\cb\oeta)^{k-m} \, d\mu(z)  \\
& \hskip4em
 = \delta_{kl} \frac{q_{2l+2m}}{N(l)} (\xi\cb\oeta)^l
 - \frac{d_l^2}{d_{l-m}^2} (\xi\cb\oeta)^m
   \delta_{kl} q_{2l-2m} \frac{(\xi\cb\oeta)^{l-m}}{N(l-m)}
\end{align*}
by~\eqref{tTA} and~\eqref{tDL} again. Comparing this with
\begin{align*}
\spr{(\cdot\cb\xi)^l,(\cdot\cb\eta)^k}
&= \int_0^1 t^{l+k}\,d\rho(t) \intpM (z\cb\xi)^l (\oz\cb\oeta)^k \, d\mu(z) \\
&= \delta_{kl} \frac{q_{2l}}{N(l)} (\xi\cb\oeta)^l
= \delta_{kl} d_l (\xi\cb\oeta)^l,  
\end{align*}
we~thus see that, for~all~$l$,
$$ \spr{H(\cdot\cb\xi)^l,(\cdot\cb\eta)^k} =
 \Big( \frac{q_{2l+2m}}{q_{2l}} - \frac{d_l}{d_{l-m}} \Big)
 \spr{(\cdot\cb\xi)^l,(\cdot\cb\eta)^k} .   $$
Since $(\cdot\cb\xi)^l$, $\xi\in\MM$, span~$\PP_l$, it~follows that
$$ \spr{Hf_l,g_k} =  \Big( \frac{q_{2l+2m}}{q_{2l}} - \frac{d_l}{d_{l-m}} \Big)
 \spr{f_l,g_k}   $$
for any $f_l\in\PP_l$ and $g_k\in\PP_k$. In~other~words, the~operator $H$ is
diagonalized by the orthogonal decomposition $A^2(\MM)=\bigoplus_k\PP_k$, and
$$ H = \bigoplus_l  \Big( \frac{q_{2l+2m}}{q_{2l}} - \frac{d_l}{d_{l-m}} \Big)
 \; I|_{\PP_l}.   $$
Recalling that $\dim\PP_k=N(k)$, this means that
\[ H\in\cS^{p/2} \iff \sum_l 
 \Big( \frac{q_{2l+2m}}{q_{2l}} - \frac{d_l}{d_{l-m}} \Big)^{p/2} N(l)
 < \infty .   \label{tHB}   \]
Now~for our measure $d\rho(t)=\chi_{[0,1]}(t)t^{2n-1}\,dt$,
one~has $q_{2k}=\frac1{2k+2n}$,~so
\begin{align*}
d_k &= \frac{q_{2k}}{N(k)} = \frac{(n-1)!k!}{2(k+n)(2k+n-1)(k+n-2)!}  \\
&= \frac{(n-1)!}{4k^n} \Big[1- \frac{n+\frac{n-1}2+\frac{(n-1)(n-2)}2}k
 + O\Big(\frac1{k^2}\Big) \Big] 
\end{align*}
and
$$ \frac{d_k}{d_{k-m}} = 1 - \frac{mn}k + O\Big(\frac1{k^2}\Big)  $$
as $k\to+\infty$, while
$$ \frac{q_{2k+2m}}{q_{2k}} = \frac{k+n}{k+n+m}
 = 1-\frac mk + O\Big(\frac1{k^2}\Big) .   $$
Thus 
$$ \Big( \frac{q_{2k+2m}}{q_{2k}} - \frac{d_k}{d_{k-m}} \Big)
 \sim \frac{(n-1)m}k   $$
and the sum \eqref{tHB} is finite if and only~if
(recall that $n\ge2$ and $m>0$)
$$ +\infty > \sum_k k^{-p/2} N(k) \sim \sum_k k^{n-1-p/2}   $$
i.e.~if~and only~if $p>2n$. Thus we have proved~(iv).

(iv)$\implies$(v) We~have seen in the last computation that for $p>2n$,
the~operator \eqref{tHC} belongs to $\cS^{p/2}$ for any $m\ge1$.
Since all the summands $\Ho{z^\alpha}^*\Ho{z^\alpha}$ in~\eqref{tHC}
are nonnegative operators, it~follows that even $\Ho{z^\alpha}^*\Ho{z^\alpha}
\in\cS^{p/2}$ for all~$\alpha$, i.e.~$\Ho{z^\alpha}\in\cS^p$ for all~$\alpha$.
By~linearity, $\Ho g\in\cS^p$ for any polynomial~$g$, proving~(v).

The~last implication (v)$\implies$(i) is trivial, and thus the proof of
Theorem~\ref{pPC} is complete.
\qed  \end{proofC}

Note that the only place where $d\rho$ entered was in the computation in
the part (iii)$\implies$(iv). Thus~Theorem~\ref{pPC} remains in force also
for any measure $d\rho$ for which
$$ q_{2k} = a k^r [1+\tfrac bk + O(k^{-2}) ]  \quad\text{as }k\to+\infty   $$
for some $a\neq0$ and $b,r\in\RR$; because one then has
$\frac{q_{2k+2m}}{q_{2k}}=1+\frac{mr}k+O(k^{-2})$, 
$\frac{d_k}{d_{k-m}}=1+\frac{(r-n+1)m}k+O(k^{-2})$
and again $(\frac{q_{2k+2m}}{q_{2k}}-\frac{d_k}{d_{k-m}})\sim\frac{(n-1)m}k$.
In~particular, Theorem~\ref{pPC} thus also holds for the ``standard'' weighted
Bergman spaces on $\MM$ corresponding to
$d\rho(t)=\chi_{[0,1]}(t) (1-t^2)^\alpha t^{2n-1}\,dt$, $\alpha>-1$.

\section{Balanced metrics}
Returning to the formalism reviewed in connection with the TYZ expansions,
recall that a K\"ahler form $\omega=\frac i2\ddlh$ on a domain in $\CC^n$
(or~the Hermitian metric associated to~$\omega$) is called \emph{balanced}
if~the corresponding weighted Bergman kernel satisfies
\[ h(z) K_{h\det[\ddlh]}(z,z) \equiv\text{const.} \quad(\neq0); \label{tBA} \]
that~is, if~and only~if the corresponding Kempf distortion function
$\epsilon_1$ is a nonzero constant. More~generally, for $\alpha>n$ one calls
$\omega$ \emph{$\alpha$-balanced} if it is balanced and $h$ is commensurable
to $\operatorname{dist}(\cdot,\partial\Omega)^\alpha$ at the boundary.
(For~$\alpha\le n$, the~corresponding Bergman space degenerates just to
the zero function, thus $K_{h\det[\ddlh]}\equiv0$ and the left-hand side
in~\eqref{tBA} is constant zero.) This~definition turns out to indeed depend
only on $\omega=\frac i2\ddlh$ and not on the particular choice of $h$ for
a given~$\omega$, and also can be extended from domains to manifolds with
line bundles as before; see~Donaldson~\cite{Don}, Arezzo and Loi \cite{ArL}
and \cite{EKok} for further details.

The~simplest example of $\alpha$-balanced metric is $h(z)=(1-|z|^2)^\alpha$,
$\alpha>1$, on~the unit disc $\DD$ in~$\CC$; one~then gets $\det[\ddlh]=
\frac\alpha{(1-|z|^2)^2}$ and $K_{h\det[\ddlh]}(z,z)=\frac{\alpha-1}
{\pi\alpha}(1-|z|^2)^{-\alpha}$, so~that \eqref{tBA} holds with the constant
$\frac{\alpha-1}{\pi\alpha}$. Similarly for $h(z)=(1-|z|^2)^\alpha$,
$\alpha>n$, on~the unit ball $\BB^n$ of~$\CC^n$, where the constant turns
out to be $\frac{\Gamma(\alpha)}{\Gamma(\alpha-n)\alpha^n\pi^n}$.
The~only known examples of bounded domains with balanced metrics
are invariant metrics on bounded homogeneous domains
(in~particular, on~bounded symmetric domains).
In~the unbounded setting, every dilation of the Euclidean metric on $\CC^n$
is balanced (with $h(z)=e^{-\alpha|z|^2}$, $\alpha>0$, so~that $A^2(\CC^n,
h\det[\ddlh])$ is just the familiar Fock space). Balanced metrics are known
to exist in abundance on compact manifolds~\cite{Don}; the~existence of
balanced metrics e.g.~on bounded strictly pseudoconvex domains with
smooth boundary is still an open problem, and their uniqueness for
a given $\alpha$ is an open problem even on the unit~disc.

We~conclude this paper by the following simple observation concerning
balanced metrics on our Kepler manifold~$\HH$.
Note that for $n\ge3$, $\HH$~is known to be simply connected.

\begin{theorem} \label{pPH}
Let $n=\dim_\CC\HH\ge3$ and $\alpha>n$. Then either there does not exist any
$\alpha$-balanced metric on~$\HH$, or~it~is not~unique.
\end{theorem}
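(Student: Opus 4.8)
The plan is to argue by contradiction: I assume that an $\alpha$-balanced metric $\omega$ on $\HH$ exists and is \emph{unique}, and show that this forces $\omega$ to be degenerate, contradicting positivity of a K\"ahler form. The engine of the argument is the large automorphism group of~$\HH$.

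First I would exploit that both the rotations $\kappa\in\OnR$ and the real dilations $\phi_\lambda\colon z\mapsto\lambda z$, $\lambda>0$, are biholomorphisms of~$\HH$, and that each preserves the property of being $\alpha$-balanced. Recall that the Kempf distortion function $\epsilon_1(z)=h(z)K_{h\det[\ddlh]}(z,z)$ transforms as a scalar under biholomorphisms, so $\epsilon_1^{\psi^*\omega}=\epsilon_1^\omega\circ\psi$; hence if $\epsilon_1^\omega$ is a nonzero constant then so is $\epsilon_1^{\psi^*\omega}$, with the same value, i.e.~$\psi^*\omega$ is again balanced. The boundary normalization is preserved as well: $\OnR$ fixes $|z|$ outright, while a dilation scales every distance uniformly, so $h\sim\operatorname{dist}(\cdot,\partial\HH)^\alpha$ is carried to $h\circ\phi_\lambda\sim\lambda^\alpha\operatorname{dist}(\cdot,\partial\HH)^\alpha$, the same commensurability class. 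By the assumed uniqueness, $\psi^*\omega=\omega$ for every such~$\psi$.

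Next I would convert these invariances into a functional form for~$\omega$. Writing $r=|z|^2=z\cb\oz$ and averaging a potential of $\omega$ over the compact group $\OnR$ (using that pullback commutes with $\partial\dbar$, and that $\OnR$ acts transitively on~$\pM$) produces a radial potential, so $\omega=\frac i2\partial\dbar\Psi(r)$ for some function~$\Psi$. Dilation invariance then reads $\partial\dbar[\Psi(\lambda^2 r)-\Psi(r)]=0$ on $\HH$ for all $\lambda>0$. The key computation is that a radial function $G(r)$ is pluriharmonic on $\HH$ only if it is constant: evaluating the $(1,1)$-form $\partial\dbar G(r)=G''(r)\,\partial r\wedge\dbar r+G'(r)\,\partial\dbar r$ on a pair $(v,\bar v)$ with $v\in T_z\HH=\{v:v\cb z=0\}$ gives $G''(r)|\spr{v,z}|^2+G'(r)|v|^2$, and choosing $0\neq v\perp z$ inside $T_z\HH$ (possible since $\dim_\CC T_z\HH=n\ge 3$) forces $G'(r)=0$. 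Hence $\Psi(\lambda^2 r)-\Psi(r)$ is constant in $z$ for each $\lambda$, and the resulting Cauchy-type functional equation yields $\Psi(r)=a\log r+b$.

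Finally I would derive the contradiction. With $\Psi(r)=a\log r+b$ one gets $\omega=\frac a2 i\,\partial\dbar\log(z\cb\oz)$, whose associated Hermitian form evaluated on $(v,\bar v)$ equals $\frac ar\big(|v|^2-|\spr{v,z}|^2/r\big)$. Since the Euler vector $z$ itself lies in $T_z\HH$ and satisfies $\spr{z,z}=r$, this form vanishes at $v=z$, so $\omega$ is degenerate and cannot be a K\"ahler metric. This contradicts the existence of the (unique) $\alpha$-balanced~$\omega$, proving the theorem. I expect the main technical points to be the verification that dilations genuinely respect the $\alpha$-balanced normalization (the scalar transformation law for $\epsilon_1$ together with the uniform scaling of the boundary distance) and the careful evaluation of $\partial\dbar$ of radial functions on the submanifold~$\HH$, where one must restrict to the tangent space $\{v\cb z=0\}$ and keep track of which pairing ($\cb$ versus the Hermitian $\spr{\cdot,\cdot}$) appears.
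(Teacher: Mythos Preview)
Your argument is correct and takes a genuinely different route from the paper. The paper works with the full complex orthogonal group: assuming a unique $\alpha$-balanced $\omega$, the associated Hermitian metric at $e=(1,i,0,\dots,0)$ must be invariant under the isotropy subgroup $L=\{\kappa\in\OnC:\kappa e=e\}$, so the (compact) unit sphere in $T_e\HH$ is $L$-invariant; the paper then exhibits an explicit complex one-parameter family $A_z\in L$ whose orbit through $\overline e\in T_e\HH$ runs off to infinity, a contradiction. You instead use only $\OnR$ together with the real dilations $z\mapsto\lambda z$: these already force a radial potential, then via the functional equation the logarithmic form $\Psi(r)=a\log r+b$, and finally degeneracy of $\frac{ia}2\partial\dbar\log|z|^2$ on the Euler direction $v=z\in T_z\HH$. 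The paper's approach is shorter and, as noted after its proof, generalizes to any manifold whose isotropy group at a point contains a complex one-parameter subgroup acting nontrivially on the tangent space; your approach is more hands-on, actually identifying the candidate metric before ruling it out, and shows concretely \emph{why} dilation invariance is too strong.

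Two small clean-ups. First, your justification that dilations preserve the $\alpha$-balanced normalization invokes $\operatorname{dist}(\cdot,\partial\HH)$, but $\HH$ has no boundary in that sense; the paper simply cites \cite{EKok} for biholomorphism-invariance of the $\alpha$-balanced condition on simply connected spaces, and since dilations are biholomorphisms of $\HH$ you can invoke the same black box and drop the boundary-distance discussion. Second, the existence of $0\neq v\in T_z\HH$ with $\spr{v,z}=0$ requires only $n\ge2$; the hypothesis $n\ge3$ enters earlier, to ensure $\HH$ is simply connected so that a global potential exists (and the cited invariance result applies). Otherwise the pluriharmonicity computation, the Cauchy-type functional equation, and the degeneracy check are all sound.
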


\begin{proof}
Suppose there exists a unique $\alpha$-balanced K\"ahler form
$\omega=\frac i2\ddlh$ on~$\HH$. It~was shown in \cite{EKok} that the
image of an $\alpha$-balanced metric on a simply connected domain under
a biholomorphic map is again $\alpha$-balanced (with the same~$\alpha$).
(Simple connectivity is needed for some powers of Jacobians to be
single-valued when $\frac\alpha{n+1}$ is not an integer.)
Consequently, the~K\"ahler form~$\omega$, or, equivalently, the~associated
Hermitian metric~$g_{j\overline k}$, at~the point $e=(1,i,0,\dots,0)\in\HH$
has to be invariant under any biholomorphic self-map of $\HH$ fixing~$e$,
in~particular, under all elements of the isotropy subgroup
$L:=\{\kappa\in\OnC:\kappa e=e\}$ of $e$ in~$\OnC$.
The~unit sphere with respect to $g_{j\overline k}$ in the tangent space
$T_e\HH$ of $\HH$ at $e$ thus has to be invariant under~$L$.
Now $g_{j\overline k}$ being a Riemannian metric, the~unit sphere
is diffeomorphic to~$\partial\BB^n\cong\bS^{2n-1}$, in~particular,
it~is~a~compact manifold. However, $L$~is easily seen to have orbits
that extend to infinity. For~instance, the~matrix
$$ A_z := \begin{pmatrix}
  {\begin{matrix} 1+\frac{z^2}2 & \frac{iz^2}2 & iz \\
   \frac{iz^2}2 & 1-\frac{z^2}2 & -z \\
   -iz & z & 1 \end{matrix}} & \vrule & 0  \\
   \noalign{\hrule}
    0  & \vrule & I  \end{pmatrix}    $$
is~easily checked to belong to $L$ for any $z\in\CC$, while the orbit of
$\overline e=(1,-i,0,\dots,0)\in T_e\HH$ under~$A_z$,
$$ A_z \overline e = (z^2+1,i(z^2-1),-2iz)    $$
evidently does not stay in any compact subset as $z\to\infty$.
We~have reached a contradiction.
\end{proof}

The~same argument applies, in~principle, to~any manifold whose isotropy
subgroup $L$ of biholomorphic self-maps that stabilize some basepoint $e$
contains a complex one-parameter subgroup (such~as the $A_z$, $z\in\CC$, 
above): by~Liouville's theorem, the~orbit of any tangent vector under $L$
cannot stay bounded without being constant, and it follows that each
element of $L$ acts trivially (i.e.~reduces to the identity) on~the
tangent space at~$e$. Consequently, if~there exists $\kappa\in L$ with
$d\kappa_e\neq I$, then for each~$\alpha$, the~manifold either does not
admit any $\alpha$-balanced metric, or~such metric is not unique.

\end{document}